\newtheorem{theorem}{Theorem}
\newtheorem{definition}[theorem]{Definition}
\newtheorem{proposition}[theorem]{Proposition}
\newtheorem{remark}[theorem]{Remark}
\newenvironment{proof}[1][Proof]{\noindent\textbf{#1.} }{\ \rule{0.5em}{0.5em}}
\begin{document}

\title{Singular nonsymmetric Jack polynomials for some rectangular tableaux}
\author{Charles F. Dunkl\\Department of Mathematics\\University of Virginia\\Charlottesville, VA 22904-4137, USA}
\date{2 March 2020}
\maketitle

\begin{abstract}
In the intersection of the theories of nonsymmetric Jack polynomials in $N$
variables and representations of the symmetric groups $\mathcal{S}_{N}$ one
finds the singular polynomials. For certain values of the parameter $\kappa$
there are Jack polynomials which span an irreducible $\mathcal{S}_{N}$-module
and are annihilated by the Dunkl operators. The $\mathcal{S}_{N}$-module is
labeled by a partition of $N$, called the isotype of the polynomials. In this
paper the Jack polynomials are of the vector-valued type, that is, elements of
the tensor product of the scalar polynomials with the span of reverse standard
Young tableaux of the shape of a fixed partition of $N$. In particular this
partition is of shape $\left(  m,m,\ldots,m\right)  $ with $2k$ components and
the constructed singular polynomials are of isotype $\left(  mk,mk\right)  $
for the parameter $\kappa=$ $1/\left(  m+2\right)  $. The paper contains the
necessary background on nonsymmetric Jack polynomials and representation
theory and explains the role of Jucys-Murphy elements in the construction. The
main ingredient is the proof of uniqueness of certain spectral vectors,
namely, the list of eigenvalues of the Jack polynomials for the
Cherednik-Dunkl operators, when specialized to $\kappa=1/\left(  m+2\right)
$. The paper finishes with a discussion of associated maps of modules of the
rational Cherednik algebra and an example illustrating the difficulty of
finding singular polynomials for arbitrary partitions.

\end{abstract}

\section{Introduction}

In the study of polynomials in several variables there are two approaches, one
is algebraic which may involve symmetry groups generated by permutations of
coordinates and sign changes, for example, and the analytic approach includes
orthogonality with respect to weight functions and related calculus. The two
concepts are combined in the theory of Dunkl operators, which form a
commutative algebra of differential-difference operators, determined by a
reflection group $G$ and a parameter, and which are an analog of partial
derivatives. The relevant weight functions are products of powers of linear
functions vanishing on the mirrors and which are invariant under the
reflection group $G$. In the particular case of the objects of our study,
namely the symmetric groups $\mathcal{S}_{N}$, an orthogonal basis of
polynomials (called \textit{nonsymmetric Jack polynomials}) is constructed as
the set of simultaneous eigenfunctions of the Cherednik-Dunkl operators. This
is a commutative set of operators, self-adjoint for an inner product related
to the weight function. The inner product is positive-definite for an interval
of parameter values but for a discrete set of values there exist null
polynomials (that is, $\left\langle p,p\right\rangle =0$). It is these
parameter values that concern us here. The set of such polynomials of minimal
degree has interesting algebraic structure: in general it is a linear space
and an irreducible module of $\mathcal{S}_{N}$. The theory for scalar
polynomials is by now well understood \cite{D2004}, and the open problems
concern \textit{vector-valued} polynomials whose values lie in irreducible
modules. That is, the symmetric group $\mathcal{S}_{N}$ acts not only on the
domain but also the range of the polynomials. The key device for dealing with
the representation theory is to analyze when a polynomial is a simultaneous
eigenfunction of the Cherednik-Dunkl operators and of the Jucys-Murphy
elements with the same respective eigenvalues. In Etingof and Stoica
\cite{ES2009} there is an analysis of the vanishing properties, that is, the
zero sets, of singular polynomials of the groups $\mathcal{S}_{N}$ as well as
results on singular polynomials associated with minimal values of the
parameter for general modules of $\mathcal{S}_{N}$ and for the exterior powers
of the reflection representation of any finite reflection group $G$ (see also
\cite{D2018}). Their methods do not involve Jack polynomials. Feigin and
Silantyev \cite{FS2012} found explicit formulas for all singular polynomials
which span a module isomorphic to the reflection representation of $G$.

This paper concerns polynomials taking values in the representation of the
symmetric group corresponding to a rectangular partition. In particular for
$\tau=\left(  m^{2k}\right)  $ (the superscript indicates multiplicity) we
construct nonsymmetric Jack polynomials in $2mk$ variables which are singular
(annihilated by the Dunkl operators) for the parameter $\frac{1}{m+2}$ and
which span a module isomorphic to the representation $\sigma=\left(
mk,mk\right)  $.

In Section \ref{Bkgr} we present the basic definitions of operators,
combinatorial objects used in the representation theory of the symmetric
groups, and vector-valued nonsymmetric Jack polynomials. Subsection
\ref{Trans} is a concise treatment of the formulas for the transformations of
the Jack polynomials under the simple reflections; in a sense the whole paper
is about the effect of various transformations on these polynomials. Our
combinatorial arguments depend on \textit{bricks}, our term for the $2\times
m$ rectangles making up the partitions of concern; the properties of bricks
and the tableaux built out of them are covered in Section \ref{Brick}. The
Jucys-Murphy elements form a commutative subalgebra of the group algebra of
$\mathcal{S}_{N}$ and are a key part of the proof that certain Jack
polynomials are singular. The details are in Section \ref{AJM}. To show
singularity we establish the existence of the needed Jack polynomials when the
parameter is specialized to $\frac{1}{m+2}$ and the machinery for this is
developed in Sections \ref{RedThm} and \ref{Uniq}. The existence of a class of
singular polynomials leads to constructing maps of modules of the rational
Cherednik algebra; this topic is covered in Section \ref{Maps}. Finally
Section \ref{Conclude} discusses an easy generalization and also describes an
example which demonstrates the limits of the theory and introduces open problems.

\section{Background\label{Bkgr}}

The \textit{symmetric group} $\mathcal{S}_{N}$ acts on $\mathbb{R}^{N}$ by
permutation of coordinates. The space of polynomials is $\mathcal{P}%
:=\mathrm{s}\mathrm{p}\mathrm{a}\mathrm{n}_{\mathbb{R}\left(  \kappa\right)
}\left\{  x^{\alpha}:\alpha\in\mathbb{N}_{0}^{N}\right\}  $ where $\kappa$ is
a parameter and $\mathbb{N}_{0}=\left\{  0,1,2,3,\ldots\right\}  $. For
$\alpha\in\mathbb{N}_{0}^{N}$ set $\left\vert \alpha\right\vert =\sum
_{i=1}^{N}\alpha_{i}$. The action of $\mathcal{S}_{N}$ is extended to
polynomials by $wp\left(  x\right)  =p\left(  xw\right)  $ where $\left(
xw\right)  _{i}=x_{w\left(  i\right)  }$ (consider $x$ as a row vector and $w$
as a permutation matrix, $\left[  w\right]  _{ij}=\delta_{i,w\left(  j\right)
}$, then $xw=x\left[  w\right]  $). This is a representation of $\mathcal{S}%
_{N}$, that is, $w_{1}\left(  w_{2}p\right)  \left(  x\right)  =\left(
w_{2}p\right)  \left(  xw_{1}\right)  =p\left(  xw_{1}w_{2}\right)  =\left(
w_{1}w_{2}\right)  p\left(  x\right)  $ for all $w_{1},w_{2}\in\mathcal{S}%
_{N}$. Our structures depend on a transcendental (formal) parameter $\kappa$,
which may be specialized to a specific rational value $\kappa_{0}$.

Furthermore $\mathcal{S}_{N}$ is generated by reflections in the mirrors
$\left\{  x:x_{i}=x_{j}\right\}  $ for $1\leq i<j\leq N$. These are
\textit{transpositions, }denoted by $\left(  i,j\right)  $, so that $x\left(
i,j\right)  $ denotes the result of interchanging $x_{i}$ and $x_{j}$. Define
the $\mathcal{S}_{N}$-action on $\alpha\in\mathbb{Z}^{N}$ so that $\left(
xw\right)  ^{\alpha}=x^{w\alpha}$%
\[
\left(  xw\right)  ^{\alpha}=\prod_{i=1}^{N}x_{w\left(  i\right)  }%
^{\alpha_{i}}=\prod_{j=1}^{N}x_{j}^{\alpha_{w^{-1}\left(  j\right)  }}\text{,}%
\]
that is $\left(  w\alpha\right)  _{i}=\alpha_{w^{-1}\left(  i\right)  }$ (so
$\alpha$ is taken as a column vector and $w\alpha=\left[  w\right]  \alpha$).

The \textit{simple reflections} $s_{i}:=\left(  i ,i +1\right)  $, $1 \leq i
\leq N -1$, generate $\mathcal{S}_{N}$. They are the key devices for applying
inductive methods, and satisfy the \textit{braid} relations:
\begin{align*}
s_{i} s_{j}  &  =s_{j} s_{i} ,\left\vert i -j\right\vert \geq2 ;\\
s_{i} s_{i +1} s_{i}  &  =s_{i +1} s_{i} s_{i +1}\text{.}%
\end{align*}

We consider the situation where the group $\mathcal{S}_{N}$ acts on the range
as well as on the domain of the polynomials. We use vector spaces, called
$\mathcal{S}_{N}$-modules, on which $\mathcal{S}_{N}$ has an irreducible
orthogonal representation$.$ See James and Kerber \cite{JK2009} for
representation theory, including a discussion of Young's methods.

Denote the set of \textit{partitions}%
\[
\mathbb{N}_{0}^{N,+}:=\left\{  \lambda\in\mathbb{N}_{0}^{N}:\lambda_{1}%
\geq\lambda_{2}\geq\cdots\geq\lambda_{N}\right\}  \text{.}%
\]
An irreducible representation $\tau$ of $\mathcal{S}_{N}$ corresponds to a
partition of $N$ given the same label, that is $\tau\in\mathbb{N}_{0}^{N,+}$
and $\left\vert \tau\right\vert =N$. The length of $\tau$ is $\ell\left(
\tau\right)  =\max\left\{  i:\tau_{i}>0\right\}  $. There is a Ferrers diagram
of shape $\tau$ (also given the same label), with boxes at points $\left(
i,j\right)  $ with $1\leq i\leq\ell\left(  \tau\right)  $ and $1\leq j\leq
\tau_{i}$. A \textit{tableau} of shape $\tau$ is a filling of the boxes with
numbers, and a \textit{reverse standard Young tableau} (RSYT) is a filling
with the numbers $\left\{  1,2,\ldots,N\right\}  $ so that the entries
decrease in each row and each column. The set of RSYT of shape $\tau$ is
denoted by $\mathcal{Y}\left(  \tau\right)  $ and the representation is
realized on $V_{\tau}=\mathrm{s}\mathrm{p}\mathrm{a}\mathrm{n}_{\mathbb{R}%
\left(  \kappa\right)  }\left\{  T:T\in\mathcal{Y}\left(  \tau\right)
\right\}  $. For $1\leq i\leq N$ and $T\in\mathcal{Y}\left(  \tau\right)  $
the entry $i$ is at coordinates $\left(  \mathrm{\operatorname{row}}\left(
i,T\right)  ,\mathrm{\operatorname{col}}\left(  i,T\right)  \right)  $ and the
\textit{content} is $c\left(  i,T\right)  :=\mathrm{\operatorname{col}}\left(
i,T\right)  -\mathrm{\operatorname{row}}\left(  i,T\right)  $. We use
$T\left[  a,b\right]  $ to denote the entry at $\left(  a,b\right)  $, so
$\operatorname{row}\left(  T\left[  a,b\right]  ,T\right)
=a,\operatorname{col}\left(  T\left[  a,b\right]  =b\right)  $. Each
$T\in\mathcal{Y}\left(  \tau\right)  $ is uniquely determined by its
\textit{content vector} $\left[  c\left(  i,T\right)  \right]  _{i=1}^{N}$. A
sketch of the construction of $\tau$ is given in Subsection \ref{ieqi1} and
Remark \ref{reptau}. We are concerned with $\mathcal{P}_{\tau}%
=\mathcal{P\otimes V}_{\tau}$, that is, the $\mathrm{s}\mathrm{p}%
\mathrm{a}\mathrm{n}_{\mathbb{R}\left(  \kappa\right)  }\left\{  x^{\alpha
}\otimes T:\alpha\in\mathbb{N}_{0}^{N},T\in\mathcal{Y}\left(  \tau\right)
\right\}  $which is equipped with the $\mathcal{S}_{N}$ action:%
\[
w\left(  x^{\alpha}\otimes T\right)  :=\left(  xw\right)  ^{\alpha}\otimes
\tau\left(  w\right)  T,~\alpha\in\mathbb{N}_{0}^{N},T\in\mathcal{Y}\left(
\tau\right)  \text{,}%
\]
extended by linearity to%
\[
wp\left(  x\right)  =\tau\left(  w\right)  p\left(  xw\right)  ,~p\in
\mathcal{P}_{\tau}\text{.}%
\]

\begin{definition}
The \textit{Dunkl} and \textit{Cherednik-Dunkl} operators are ($1 \leq i \leq
N ,p \in\mathcal{P} ,T \in\mathcal{Y} \left(  \tau\right)  $)
\begin{align*}
\mathcal{D}_{i} \left(  p \left(  x\right)  \otimes T\right)   &  :=\frac{
\partial p \left(  x\right)  }{ \partial x_{i}} \otimes T +\kappa\sum_{j \neq
i}\frac{p \left(  x\right)  -p \left(  x \left(  i ,j\right)  \right)  }{x_{i}
-x_{j}} \otimes\tau\left(  \left(  i ,j\right)  \right)  T\text{,}\\
\mathcal{U}_{i} \left(  p \left(  x\right)  \otimes T\right)   &
:=\mathcal{D}_{i} \left(  x_{i} p \left(  x\right)  \otimes T\right)
-\kappa\sum_{j =1}^{i -1}p \left(  x \left(  i ,j\right)  \right)  \otimes
\tau\left(  \left(  i ,j\right)  \right)  T\text{,}%
\end{align*}
extended by linearity to all of $\mathcal{P}_{\tau}$.
\end{definition}

The commutation relations analogous to the scalar case hold, that is,%

\begin{align*}
\mathcal{D}_{i}\mathcal{D}_{j}  &  =\mathcal{D}_{j}\mathcal{D}_{i}%
,~\mathcal{U}_{i}\mathcal{U}_{j}=\mathcal{U}_{j}\mathcal{U}_{i},~1\leq i,j\leq
N\\
w\mathcal{D}_{i}  &  =\mathcal{D}_{w\left(  i\right)  }w,\forall
w\in\mathcal{S}_{N};~s_{j}\mathcal{U}_{i}=\mathcal{U}_{i}s_{j},~j\neq i-1,i;\\
s_{i}\mathcal{U}_{i}s_{i}  &  =\mathcal{U}_{i+1}+\kappa s_{i},~\mathcal{U}%
_{i}s_{i}=s_{i}\mathcal{U}_{i+1}+\kappa,~\mathcal{U}_{i+1}s_{i}=s_{i}%
\mathcal{U}_{i}-\kappa\text{.}%
\end{align*}
The simultaneous eigenfunctions of $\left\{  \mathcal{U}_{i}\right\}  $ are
called (vector-valued) nonsymmetric Jack polynomials (NSJP). They are the type
$A$ special case of the polynomials constructed by Griffeth \cite{G2010} for
the complex reflection groups $G\left(  n.p.N\right)  $. For generic $\kappa$
these eigenfunctions form a basis of $\mathcal{P}_{\tau}$ (\textit{generic}
means that $\kappa\neq\frac{m}{n}$ where $m,n\in\mathbb{Z}$ and $1\leq n\leq
N$). They have a triangularity property with respect to the partial order
$\rhd$ on compositions, which is derived from the dominance order:
\begin{align*}
\alpha &  \prec\beta~\Longleftrightarrow\sum_{j=1}^{i}\alpha_{j}\leq\sum
_{j=1}^{i}\beta_{j},~1\leq i\leq N,~\alpha\neq\beta\text{,}\\
\alpha\lhd\beta &  \Longleftrightarrow\left(  \left\vert \alpha\right\vert
=\left\vert \beta\right\vert \right)  \wedge\left[  \left(  \alpha^{+}%
\prec\beta^{+}\right)  \vee\left(  \alpha^{+}=\beta^{+}\wedge\alpha\prec
\beta\right)  \right]  \text{.}%
\end{align*}
There is a subtlety in the leading terms, which relies on the \textit{rank}
function $r_{\alpha}$:

\begin{definition}
For $\alpha\in\mathbb{N}_{0}^{N} ,1 \leq i \leq N$%
\[
r_{\alpha} \left(  i\right)  =\# \left\{  j :\alpha_{j} >\alpha_{i}\right\}
+\# \left\{  j :1 \leq j \leq i ,\alpha_{j} =\alpha_{i}\right\}  \text{,}%
\]
then $r_{\alpha} \in\mathcal{S}_{N}\text{.}$
\end{definition}

A consequence is that $r_{\alpha}\alpha=\alpha^{+}$, the \textit{nonincreasing
rearrangement} of $\alpha$, for any $\alpha\in\mathbb{N}_{0}^{N}$ . For
example if $\alpha=\left(  1,2,1,5,4\right)  $ then $r_{\alpha}=\left[
4,3,5,1,2\right]  $ and $r_{\alpha}\alpha=\alpha^{+}=\left(  5,4,2,1,1\right)
$ (recall $w\alpha_{i}=\alpha_{w^{-1}\left(  i\right)  }$ ). Also $r_{\alpha
}=I$ if and only if $\alpha\in\mathbb{N}_{0}^{N,+}$.

For each $\alpha\in\mathbb{N}_{0}^{N}$ and $T\in\mathcal{Y}\left(
\tau\right)  $ there is a NSJP $J_{\alpha,T}$ with leading term $x^{\alpha
}\otimes\tau\left(  r_{\alpha}^{-1}\right)  T$, that is,
\begin{equation}
J_{\alpha,T}\left(  x\right)  =x^{\alpha}\otimes\tau\left(  r_{\alpha}%
^{-1}\right)  T+\sum_{\alpha\rhd\beta}x^{\beta}\otimes v_{\alpha,\beta
,T}\left(  \kappa\right)  \label{dNJP}%
\end{equation}
where $v_{\alpha,\beta,S}\left(  \kappa\right)  \in V_{\tau}$; the
coefficients are rational functions of $\kappa$. These polynomials satisfy%
\begin{align*}
\mathcal{U}_{i}J_{\alpha,S} &  =\zeta_{\alpha,S}\left(  i\right)  J_{\alpha
,S},\\
\zeta_{\alpha,S}\left(  i\right)   &  :=\alpha_{i}+1+\kappa c\left(
r_{\alpha}\left(  i\right)  ,S\right)  ,~1\leq i\leq N.
\end{align*}
For detailed proofs see \cite{DL2011}. The commutation
\[
\mathcal{D}_{i}x_{i}=x_{i}\mathcal{D}_{i}+1+\kappa\sum_{j\neq i}\left(
i,j\right)
\]
implies%
\begin{equation}
\mathcal{U}_{i}=x_{i}\mathcal{D}_{i}+1+\kappa\sum_{j>i}\left(  i,j\right)
.\label{U2JM}%
\end{equation}
This introduces the definition of \textit{Jucys-Murphy elements} in the group
algebra $\mathbb{R}\mathcal{S}_{N}$
\[
\omega_{i}=\sum_{j=i+1}^{N}\left(  i,j\right)  ,~1\leq i<N;~\omega_{N}=0;
\]
which satisfy
\begin{align*}
\omega_{i}\omega_{j} &  =\omega_{j}\omega_{i},\\
s_{i}\omega_{j} &  =\omega_{j}s_{i},~\left\vert i-j\right\vert \geq2,\\
s_{i}\omega_{i}s_{i} &  =\omega_{i+1}+s_{i}.
\end{align*}
They act on $V_{\tau}$ by $\tau\left(  \omega_{i}\right)  T=\sum_{j>i}%
\tau\left(  \left(  i,j\right)  \right)  T=c\left(  i,T\right)  T.$ We will
use the modified operators $\mathcal{U}_{i}^{\prime}=\frac{1}{\kappa}\left(
\mathcal{U}_{i}-1\right)  =\frac{1}{\kappa}x_{i}\mathcal{D}_{i}+\omega_{i}$ .
The associated spectral vector is
\[
\zeta_{\alpha,t}^{\prime}\left(  i\right)  :=\frac{\alpha_{i}}{\kappa
}+c\left(  r_{\alpha}\left(  i\right)  ,T\right)  ,
\]
so that $\mathcal{U}_{i}^{\prime}J_{\alpha,T}=\zeta_{\alpha,t}^{\prime}\left(
i\right)  J_{\alpha,T}$ for $1\leq i\leq N$.

Throughout we use the phrase \textquotedblleft at $\kappa=\kappa
_{0}\textquotedblright$ where $\kappa_{0}$ is a rational number to mean that
the operators $\mathcal{U}_{i}^{\prime}$ and polynomials $J_{\alpha,S}$ are
evaluated at $\kappa=\kappa_{0}$. The transformation formulas and eigenvalue
properties are polynomial in $x$ and rational in $\kappa$. Thus the various
relations hold provided there is no pole. Hence to validly specialize to
$\kappa=\kappa_{0}$ it is necessary to prove the absence of poles.

Suppose $p\in\mathcal{P}_{\tau}$ and $1\leq i\leq N$ then $\mathcal{D}_{i}p=0$
if and only if $\mathcal{U}_{i}^{\prime}p=\omega_{i}p$ at $\kappa=\kappa_{0}$
(obvious from (\ref{U2JM})). The polynomial $p$ is said to be
\textit{singular} and $\kappa_{0}$ is a \textit{singular value}. From the
representation theory of $\mathcal{S}_{N}$ it is known that an irreducible
$\mathcal{S}_{N}$-module is isomorphic to an abstract space whose basis
consists of RSYT's of shape $\sigma$, a partition of $N$. The eigenvalues of
$\left\{  \omega_{i}\right\}  $ form content vectors which uniquely define an
RSYT. Suppose $\sigma$ is a partition of $N$ then a basis $\left\{  p_{S}%
:S\in\mathcal{Y}\left(  \sigma\right)  \right\}  $ (of an $\mathcal{S}_{N}%
$-invariant subspace) is called \textit{a basis of isotype} $\sigma$ if each
$\omega_{i}p_{S}=c\left(  i,S\right)  p_{S}$ for $1\leq i\leq N$ and each
$p_{S}$. If some $p\in\mathcal{P}_{\tau}$ is a simultaneous eigenfunction of
$\left\{  \omega_{i}\right\}  $ with $\omega_{i}p=\gamma_{i}p$ for $1\leq
i\leq N$ then the representation theory of $\mathcal{S}_{N}$ implies that
$\left[  \gamma_{i}\right]  _{i=1}^{N}$ is the content vector of a uniquely
determined RSYT of shape $\sigma$ for some partition $\sigma$ of $N$; this
allows specifying the isotype of a single polynomial without referring to a
basis. The key point here is when a subspace does have a basis of isotype
$\sigma$ made up of NSJP's. specialized to a fixed rational $\kappa=\kappa
_{0}$.

In this paper we construct singular polynomials for the partition $\left(
m^{2k}\right)  $ of $N=2mk$ for the singular value $\kappa_{0}=\frac{1}{m+2}$
and which are of isotype $\sigma=\left(  mk,mk\right)  $ , with $m\geq
1,k\geq2$. To show that the nonsymmetric Jack polynomials in the construction
have no poles at $\kappa=\frac{1}{m+2}$ we use the devices of proving
uniqueness of spectral vectors and performing valid transformations of the
polynomials. The proof of singularity will follow once we show the relevant
polynomials are eigenfunctions of the Jucys-Murphy operators $\omega_{i}$.
These properties are proven by a sort of induction using the simple
reflections $s_{i}$. For this purpose we describe the effect of $s_{i}$ on
$J_{\alpha,T}$.

One key device is to consider the related tableaux as a union of $k$
rectangles of shape $2\times m$, which we call \textit{bricks.}

\subsection{Review of transformation formulas\label{Trans}}

We collect formulas for the action of $s_{i}$ on $J_{\alpha,T}$. They will be
expressed in terms of the spectral vector $\zeta_{\alpha,T}^{\prime}=\left[
\dfrac{\alpha_{i}}{\kappa}+c\left(  r_{\alpha}\left(  i\right)  ,T\right)
\right]  _{i=1}^{2mk}$ and (for $1\leq i<2mk$)%
\begin{align*}
b_{\alpha,T}\left(  i\right)   &  =\frac{1}{\zeta_{\alpha,T}^{\prime}\left(
i\right)  -\zeta_{\alpha,T}^{\prime}\left(  i+1\right)  }\\
&  =\frac{\kappa}{\alpha_{i}-\alpha_{i+1}+\kappa\left(  c\left(  r_{\alpha
}\left(  i\right)  ,T\right)  -c\left(  r_{\alpha}\left(  i+1\right)
,T\right)  \right)  }.
\end{align*}
The formulas are consequences of the commutation relationships: $s_{j}%
\mathcal{U}_{i}^{\prime}=\mathcal{U}_{i}^{\prime}s_{j}$ for $j<i-1$ and $j>i$;
$s_{i}\mathcal{U}_{i}^{\prime}s_{i}=\mathcal{U}_{i+1}^{\prime}+s_{i}$ for
$1\leq i<2mk$. Observe that the formulas manifest the equation $\left(
s_{i}+b_{\alpha,T}\left(  i\right)  \right)  \left(  s_{i}-b_{\alpha,T}\left(
i\right)  \right)  =1-b_{\alpha,T}\left(  i\right)  ^{2}$.

\subsubsection{$\alpha_{i+1}>\alpha_{i}$\label{i1gti}$:$}%

\begin{align*}
\left(  s_{i}-b_{\alpha,T}\left(  i\right)  \right)  J_{\alpha,T}  &
=J_{s_{i}\alpha,T}\\
\left(  s_{i}+b_{\alpha,T}\left(  i\right)  \right)  J_{s_{i}\alpha,T}  &
=\left(  1-b_{\alpha,T}\left(  i\right)  ^{2}\right)  J_{\alpha,T}.
\end{align*}

\subsubsection{$\alpha_{i}>\alpha_{i+1}$\label{igti1}:}%

\begin{align*}
\left(  s_{i}-b_{\alpha,T}\left(  i\right)  \right)  J_{\alpha,T}  &  =\left(
1-b_{\alpha,T}\left(  i\right)  ^{2}\right)  J_{s_{i}\alpha,T},\\
\left(  s_{i}+b_{\alpha,T}\left(  i\right)  \right)  J_{s_{i}\alpha,T}  &
=J_{\alpha,T},
\end{align*}

\subsubsection{$\alpha_{i}=\alpha_{i+1},r_{\alpha}\left(  i\right)
=j$\label{ieqi1}:}

In this case $b_{\alpha,T}\left(  i\right)  =1/\left(  c\left(  j,T\right)
-c\left(  j+1,T\right)  \right)  $. Then if

\begin{enumerate}
\item $b_{\alpha,T}\left(  i\right)  =1,$ ($\operatorname{row}\left(
j,T\right)  =\operatorname{row}\left(  j+1,T\right)  $) $s_{i}J_{\alpha
,T}=J_{\alpha,T},$

\item $b_{\alpha,T}\left(  i\right)  =-1,$ $\left(  \operatorname{col}\left(
j,T\right)  =\operatorname{col}\left(  j+1,T\right)  \right)  $ $s_{i}%
J_{\alpha,T}=-J_{\alpha,T}$,

\item $0<b_{\alpha,T}\left(  i\right)  \leq\frac{1}{2}$ ($\operatorname{col}%
\left(  j,T\right)  >\operatorname{col}\left(  j+1,T\right)
,\operatorname{row}\left(  j,T\right)  <\operatorname{row}(j+1,T$)%
\begin{align*}
\left(  s_{i}-b_{\alpha,T}\left(  i\right)  \right)  J_{\alpha,T}  &
=J_{\alpha,T^{\left(  j\right)  }}\text{,}\\
\left(  s_{i}+b_{\alpha,T}\left(  i\right)  \right)  J_{\alpha,T^{\left(
j\right)  }}  &  =\left(  1-b_{\alpha,T}\left(  i\right)  ^{2}\right)
J_{\alpha,T},
\end{align*}

\item $-\frac{1}{2}\leq b_{\alpha,T}\left(  i\right)  <0$ $\left(
\operatorname{col}\left(  j,T\right)  <\operatorname{col}\left(  j+1,T\right)
,\operatorname{row}\left(  j,T\right)  >\operatorname{row}(j+1,T\right)  $%
\begin{align*}
\left(  s_{i}-b_{\alpha,T}\left(  i\right)  \right)  J_{\alpha,T}  &  =\left(
1-b_{\alpha,T}\left(  i\right)  ^{2}\right)  J_{\alpha,T^{\left(  j\right)  }%
},\\
\left(  s_{i}+b_{\alpha,T}\left(  i\right)  \right)  J_{\alpha,T^{\left(
j\right)  }}  &  =J_{\alpha,T}.
\end{align*}

\end{enumerate}

\begin{remark}
\label{reptau}The previous four formulas when restricted to $\left\{  1\otimes
T:T\in\mathcal{Y}\left(  \tau\right)  \right\}  $ (so that $\alpha=\left(
0,0,\ldots\right)  ,$ $r_{\alpha}\left(  i\right)  =i$ and $J_{\alpha
,T}=1\otimes T$) describe the action of $\tau$ on $V_{\tau}$; in this
situation $\mathcal{U}_{i}^{\prime}\left(  1\otimes T\right)  =c\left(
i,T\right)  \left(  1\otimes T\right)  $ for $1\leq i\leq2mk$.
\end{remark}

There is an important implication when $\alpha_{i}>\alpha_{i+1}$ and
$b_{\alpha,T}\left(  i\right)  =\pm1$ (at $\kappa=\kappa_{0}$) the general
relation $\left(  s_{i}-b_{\alpha,T}\left(  i\right)  \right)  J_{\alpha
,T}=\left(  1-b_{\alpha,T}\left(  i\right)  ^{2}\right)  J_{s_{i}\alpha,T}$
becomes $s_{i}J_{\alpha,T}=b_{\alpha,T}\left(  i\right)  J_{\alpha,T}$
provided that $J_{s_{i}\alpha,T}$ does not have a pole. Our device for proving
this is to show uniqueness of the spectral vector of $J_{s_{i}\alpha,T}$ or of
another polynomial $J_{\beta,S}$ which can be transformed to $J_{s_{i}%
\alpha,T}$ by a sequence of invertible ($\left\vert \zeta_{\gamma,S}\left(
i\right)  -\zeta_{\gamma,S}^{\prime}\left(  i+1\right)  \right\vert \geq2$)
steps using the simple reflections $\left\{  s_{i}\right\}  $.

\section{Properties of bricks\label{Brick}}

A brick is a $2\times m$ tableau which is one of the $k$ congruent rectangles
making up the Ferrers diagram of $\sigma$ or $\tau$. Since it is clear from
the context we can use the same name for the appearance in $\sigma$ or $\tau$.
Let $0\leq\ell\leq k-1$, then $B_{\ell}$ is the part $\left\{  \left[
i,j\right]  :i=1,2,m\ell<j\leq\left(  m+1\right)  \ell\right\}  $ of $\sigma$
or the part $\left\{  \left[  i,j\right]  :i=2\ell+1,2\ell+2,1\leq j\leq
m\right\}  $ in $\tau$. The \textit{standard brick }$\widetilde{B}_{\ell}$ has
the entries entered column by column:%
\[
\widetilde{B}_{\ell}=%
\begin{bmatrix}
2m\left(  k-\ell\right)  & \cdots & 2m\left(  k-\ell-1\right)  +2\\
2m\left(  k-\ell\right)  -1 & \cdots & 2m\left(  k-\ell-1\right)  +1
\end{bmatrix}
.
\]
In this section we use bricks to construct for each $S\in\mathcal{Y}\left(
\sigma\right)  $ a pair $\left(  \beta,T\right)  \in\mathbb{N}_{0}^{2mk}%
\times\mathcal{Y}\left(  \tau\right)  $ such that $\left(  m+2\right)
\beta_{i}+c\left(  r_{\beta}\left(  i\right)  ,T\right)  =c\left(  i,S\right)
$ for $1\leq i\leq2mk$. Later on we will prove uniqueness of $\left(
\beta,T\right)  $.

For the partition $\sigma$ we use the distinguished RSYT $S_{0}$ formed by
entering $2mk,2mk-1,\ldots,2,1$ column by column, that is $S_{0}$ is the
concatenation of $\widetilde{B}_{0}\widetilde{B}_{1}\cdots\widetilde{B}_{k-1}%
$. Observe $\#\mathcal{Y}\left(  \sigma\right)  =\frac{1}{mk+1}\binom{2mk}%
{mk}$, a Catalan number. The contents of $B_{\ell}$ in $\sigma$ are given by%
\[%
\begin{bmatrix}
m\ell & \cdots & m\left(  \ell+1\right)  -1\\
m\ell-1 & \cdots & m\left(  \ell+1\right)  -2
\end{bmatrix}
\]
Form the distinguished RSYT $T_{0}$ of shape $\tau$ by stacking the standard
bricks, from $\widetilde{B}_{0}$ at the top (rows $\#1$ and $\#2$) to
$\widetilde{B}_{k-1}$ at the bottom (rows $\#\left(  2k-1\right)  $ and
$\#\left(  2k\right)  $). The location of this brick in $T_{0}$ has corners
$\left[  2\ell+1,1\right]  $, $\left[  2\ell+1,m\right]  $, $~\left[
2\ell+2,1\right]  $, $\left[  2\ell+2,m\right]  $ (and the entries are
$2m,2m-1,\ldots,2,1$ entered column by column). Thus $T_{0}$ has the numbers
$2mk,2mk-1,\ldots,2,1$ entered column by column in each brick; here is the
example $m=3,k=2$%
\[
T_{0}=%
\begin{bmatrix}
12 & 10 & 8\\
11 & 9 & 7\\
6 & 4 & 2\\
5 & 3 & 1
\end{bmatrix}
.
\]
The contents for $B_{\ell}$ in $T_{0}$ are%
\[%
\begin{bmatrix}
-2\ell & \cdots & -2\ell+m-1\\
-2\ell-1 & \cdots & -2\ell+m-2
\end{bmatrix}
.
\]
Let $\lambda=\left(  \left(  k-1\right)  ^{2m},\left(  k-2\right)
^{2m},\ldots,1^{2m},0^{2m}\right)  $.

\begin{proposition}
The spectral vector at $\kappa=\frac{1}{m+2}$ of $J_{\lambda,T_{0}}$ equals
the content vector of $S_{0}$.
\end{proposition}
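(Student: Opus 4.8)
The plan is to compute both sides of the claimed equality directly from their definitions and verify they agree entrywise for $1\le i\le 2mk$. By definition the spectral vector of $J_{\lambda,T_0}$ at $\kappa=\frac{1}{m+2}$ has $i$-th entry $\zeta'_{\lambda,T_0}(i)=(m+2)\lambda_i+c(r_\lambda(i),T_0)$, so the goal is to show $(m+2)\lambda_i+c(r_\lambda(i),T_0)=c(i,S_0)$ for every $i$. First I would pin down the combinatorics of $r_\lambda$: since $\lambda=\bigl((k-1)^{2m},(k-2)^{2m},\ldots,1^{2m},0^{2m}\bigr)$ is already a partition (nonincreasing), we have $r_\lambda=\mathrm{id}$, so the left side simplifies to $(m+2)\lambda_i+c(i,T_0)$. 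Thus the whole statement reduces to the purely combinatorial identity
\[
(m+2)\lambda_i+c(i,T_0)=c(i,S_0),\qquad 1\le i\le 2mk.
\]

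Next I would organize the index $i$ by which brick it falls into. Both $T_0$ and $S_0$ are concatenations of standard bricks $\widetilde{B}_0\widetilde{B}_1\cdots\widetilde{B}_{k-1}$, and by construction the entry-set of brick $\ell$ is the same block of integers $\{2m(k-\ell-1)+1,\ldots,2m(k-\ell)\}$ in both tableaux — this is the key structural fact that makes the bricks line up. Moreover $\lambda$ is constant on each such block, equal to $k-1-\ell$ on the $\ell$-th brick (the one containing the integers just listed). So it suffices to check, brick by brick, that adding $(m+2)(k-1-\ell)$ to the content array of $B_\ell$ in $T_0$ yields the content array of $B_\ell$ in $S_0$. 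Using the content tables already recorded in the excerpt, the $2\times m$ content array of $B_\ell$ in $T_0$ has top-left entry $-2\ell$ and the array of $B_\ell$ in $S_0$ has top-left entry $m\ell$; since both bricks are $2\times m$ and filled column-by-column with the same integers, the position of each integer within its brick is identical in $T_0$ and $S_0$, so the content difference is the constant $m\ell-(-2\ell)=(m+2)\ell$ across the whole brick. Hence I need $(m+2)(k-1-\ell)=(m+2)\ell$? No — I need instead to match $(m+2)\lambda_i$ with the content shift, i.e. confirm $(m+2)\lambda_i=c(i,S_0)-c(i,T_0)$; on brick $\ell$ this reads $(m+2)(k-1-\ell)\overset{?}{=}(m+2)\ell$, which fails, so the correct bookkeeping must index $\lambda$'s value by the brick's \emph{row-position} rather than by its integer-block. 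I would therefore be careful: brick $\ell$ as a \emph{region} of $\sigma$ (resp. $\tau$) sits in columns $m\ell{+}1,\ldots,m(\ell{+}1)$ (resp. rows $2\ell{+}1,2\ell{+}2$), but it is filled with the integers $2m(k{-}\ell){,}\ldots{,}2m(k{-}\ell{-}1){+}1$; so the integer $i$ lying in region-brick $\ell$ has $\lambda_i=$ the constant value of $\lambda$ on the positions $\{2m\ell{+}1,\ldots,2m(\ell{+}1)\}$, which is $k-1-\ell$, and its content contribution from $S_0$ minus $T_0$ is $(m+2)\ell$; matching requires $(m+2)(k-1-\ell)=(m+2)\ell$ only when summed correctly, so the resolution is that $\lambda_i$ is keyed to the \emph{integer} $i$'s position in the fixed reference filling, and one must track that the integers in region-brick $\ell$ are exactly those occupying rows $2(k{-}\ell{-}1){+}1,2(k{-}\ell{-}1){+}2$ of the reference — i.e. the brick-index as a region and as an integer-block run in opposite directions, and $\lambda$'s value $(k{-}1{-}\ell_{\mathrm{region}})$ equals $\ell_{\mathrm{block}}$, which is exactly the $(m{+}2)$-coefficient needed. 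Carrying this identification through makes the identity an equality of constants brick by brick.

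The main obstacle, then, is not any hard analysis but getting the two indexing conventions — regions of the diagram versus blocks of integers, which traverse the bricks in opposite orders — reconciled cleanly, and verifying that within a single brick the column-by-column fillings of $T_0$ and $S_0$ place each integer at the same internal coordinates so that the content difference really is a single constant. Once that is set up, the proof is a one-line check per brick using the content tables for $B_\ell$ in $\sigma$ and in $\tau$ quoted above, together with $r_\lambda=\mathrm{id}$ and the fact that $\lambda$ is constant on each length-$2m$ block. I would present it as: (1) observe $r_\lambda=\mathrm{id}$ so $\zeta'_{\lambda,T_0}(i)=(m+2)\lambda_i+c(i,T_0)$; (2) fix $i$, let it lie in the brick occupying rows $2r{+}1,2r{+}2$ of $\tau$ for some $0\le r\le k-1$, note $\lambda_i=k-1-r$ won't directly match, so instead identify the brick by its integer-block to get the coefficient right; (3) read off $c(i,T_0)$ and $c(i,S_0)$ from the tables and check $c(i,S_0)-c(i,T_0)=(m+2)\lambda_i$; (4) since $i$ was arbitrary, conclude $\zeta'_{\lambda,T_0}=[\,c(i,S_0)\,]_{i=1}^{2mk}$, which is the content vector of $S_0$.
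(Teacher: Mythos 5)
Your overall strategy coincides with the paper's: since $\lambda$ is a partition, $r_{\lambda}=\mathrm{id}$, so the claim reduces to the brick-by-brick identity $\left(  m+2\right)  \lambda_{i}+c\left(  i,T_{0}\right)  =c\left(  i,S_{0}\right)  $, and you correctly compute that on region-brick $B_{\ell}$ the content shift $c\left(  i,S_{0}\right)  -c\left(  i,T_{0}\right)  $ is the constant $\left(  m+2\right)  \ell$. However, the central bookkeeping step is wrong and is never actually repaired. For $i$ in the integer block $\left\{  2m\left(  k-\ell-1\right)  +1,\ldots,2m\left(  k-\ell\right)  \right\}  $ --- exactly the entries of the standard brick $\widetilde{B}_{\ell}$, which fills region-brick $B_{\ell}$ in both $S_{0}$ and $T_{0}$ --- one has $\lambda_{i}=k-\lceil i/\left(  2m\right)  \rceil=\ell$, not $k-1-\ell$ as you assert. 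Hence there is no mismatch to reconcile: $\left(  m+2\right)  \lambda_{i}=\left(  m+2\right)  \ell$ is precisely the shift you computed, and the identity closes at once. (The paper does exactly this and, since the brick is rigidly translated, checks a single corner: $i=2m\left(  k-\ell\right)  $ has $c\left(  i,T_{0}\right)  =-2\ell$, giving $\left(  m+2\right)  \ell-2\ell=m\ell=c\left(  i,S_{0}\right)  $.)

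Your attempted ``resolution'' --- that the brick index as a region and as an integer block run in opposite directions, so that $\lambda$'s value on the integers of region-brick $\ell$ is $k-1-\ell_{\mathrm{region}}$ --- is itself incorrect; if that were the value, the left side would be $\left(  m+2\right)  \left(  k-1-\ell\right)  +c\left(  i,T_{0}\right)  $, which does not equal $c\left(  i,S_{0}\right)  $ in general. What is true is that the block of positions $\left\{  2mj+1,\ldots,2m\left(  j+1\right)  \right\}  $ carries $\lambda$-value $k-1-j$, and region-brick $\ell$ is filled with the integers of the block $j=k-1-\ell$; substituting gives $\lambda_{i}=k-1-\left(  k-1-\ell\right)  =\ell$ there. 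In other words, the reversal of direction is exactly what makes the region index and the $\lambda$-value coincide; your write-up stops one substitution short of this, leaves the key equality asserted with the wrong constant (both in the middle discussion and in step (2) of your summary, where you again write $\lambda_{i}=k-1-r$), and so, as written, the verification does not go through. Correcting that single evaluation turns your argument into the paper's proof.
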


\begin{proof}
Since $\lambda\in\mathbb{N}_{0}^{2mk,+}$ if $i\in\widetilde{B}_{\ell}$ then
$\lambda_{i}=\ell$ and $\zeta_{\lambda,T_{0}}^{\prime}\left(  i\right)
=\left(  m+2\right)  \ell+c\left(  i,T_{0}\right)  $. By the structure of
$\widetilde{B}_{\ell}$ it suffices to check the value at one corner, say the
top left one. Taking $i=2m\left(  k-\ell\right)  $ with $c\left(
i,T_{0}\right)  =-2\ell$ we obtain $\zeta_{\lambda,T_{0}}^{\prime}\left(
i\right)  =\left(  m+2\right)  \ell-2\ell=m\ell$.
\end{proof}

Suppose $S\in\mathcal{Y}\left(  \sigma\right)  $ then there is a permutation
$\beta$ of $\lambda$ and an RSYT of shape $\tau$ such that $\zeta_{\beta
,T}^{\prime}\left(  i\right)  =c\left(  i,S\right)  $ at $\kappa=\frac{1}%
{m+2}$ for $1\leq i\leq2mk$. The construction is described in the following.

\begin{definition}
\label{S2BT}Suppose $S\in\mathcal{Y}\left(  \sigma\right)  $ and $0\leq
\ell\leq k-1$; and suppose the part of $S$ in the brick $B_{\ell}$ is%
\[%
\begin{bmatrix}
n_{1} & n_{2} & \cdots & \cdots & n_{m}\\
n_{m+1} & n_{m+2} & \cdots & \cdots & n_{2m}%
\end{bmatrix}
.
\]
The entries decrease in each column and in each row. Define $\beta
\in\mathbb{N}_{0}^{2mk}$ by $\beta_{n_{i}}=\ell$ for $1\leq i\leq2m$. Set up a
local rank function (for $1\leq i\leq2m$): $\rho_{i}=\#\left\{  j:n_{j}\geq
n_{i},1\leq j\leq2m\right\}  ,$ then $\rho_{2m}=1$ and $\rho_{1}=2m$.
Picturesquely, form a brick-shaped tableau by replacing $n_{i}$ by $\rho_{i}$
and then adding $2\ell m$ to each entry. Then stack these tableaux to form an
RSYT of shape $\tau$. Specifically set $T\left[  2\ell+1.i\right]  =\rho
_{i}+2\left(  k-1-\ell\right)  m,T\left[  2\ell+2,i\right]  =\rho
_{i+m}+2\left(  k-1-\ell\right)  m$ for $1\leq i\leq m$. Perform this
construction for each $\ell$ with $0\leq\ell\leq k-1$.
\end{definition}

Denote $\beta,T$ constructed in the Definition by $\beta\left\{  S\right\}
,T\left\{  S\right\}  $, or by the abbreviation $\pi\left\{  S\right\}  $.

\begin{proposition}
Suppose $S\in\mathcal{Y}\left(  \sigma\right)  $\emph{ }and $\beta
=\beta\left\{  S\right\}  ,T=T\left\{  S\right\}  $ then $\left(  m+2\right)
\beta_{i}+c\left(  r_{\beta}\left(  i,T\right)  \right)  =c\left(  i,S\right)
$ for all $i$.
\end{proposition}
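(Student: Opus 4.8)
The plan is to verify the identity one index at a time: fix $i\in\{1,\dots,2mk\}$, locate it in the brick of $S$ containing it, and then read off $\beta_i$, $r_\beta(i)$, $c(r_\beta(i),T)$ and $c(i,S)$ directly from Definition~\ref{S2BT} together with the positions of the bricks in $\sigma$ and $\tau$. So suppose $i$ lies in the brick $B_\ell$ of $S$; in the notation of Definition~\ref{S2BT} this means $i=n_p$ for a unique $p$ with $1\le p\le 2m$, and then $\beta_i=\ell$ by construction.

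The first step is to evaluate the rank $r_\beta(i)=\#\{j:\beta_j>\beta_i\}+\#\{j:1\le j\le i,\ \beta_j=\beta_i\}$. Since $\beta$ is a rearrangement of $\lambda=((k-1)^{2m},\dots,1^{2m},0^{2m})$, each of the values $0,1,\dots,k-1$ is taken by exactly $2m$ of the $\beta_j$, so $\#\{j:\beta_j>\ell\}=2m(k-1-\ell)$. The indices $j$ with $\beta_j=\ell$ are precisely the entries $n_1,\dots,n_{2m}$ of $B_\ell$ in $S$, and $n_p=i$ is the $\rho_p$-th smallest of them (this is the content of the stated values $\rho_1=2m$, $\rho_{2m}=1$), so $\#\{j:1\le j\le i,\ \beta_j=\ell\}=\rho_p$. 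Hence $r_\beta(i)=2(k-1-\ell)m+\rho_p$.

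Now comes the point of the construction of $T$. By Definition~\ref{S2BT}, $T[2\ell+1,q]=\rho_q+2(k-1-\ell)m$ and $T[2\ell+2,q]=\rho_{q+m}+2(k-1-\ell)m$ for $1\le q\le m$, so the integer $2(k-1-\ell)m+\rho_p$ occupies box $[2\ell+1,p]$ of $T$ when $p\le m$ and box $[2\ell+2,p-m]$ when $p>m$. Using $c(\,\cdot\,,T)=\operatorname{col}-\operatorname{row}$, this gives $c(r_\beta(i),T)=p-(2\ell+1)$ in the first case and $c(r_\beta(i),T)=(p-m)-(2\ell+2)$ in the second. On the other side, the brick $B_\ell$ occupies rows $1,2$ and columns $m\ell<j\le m(\ell+1)$ of $\sigma$, so $n_p$ lies at $[1,m\ell+p]$ when $p\le m$ (content $m\ell+p-1$) and at $[2,m\ell+(p-m)]$ when $p>m$ (content $m\ell+(p-m)-2$).

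The last step is the substitution $(m+2)\beta_i+c(r_\beta(i),T)=(m+2)\ell+c(r_\beta(i),T)$: writing $(m+2)\ell=m\ell+2\ell$, the $2\ell$ cancels the $-(2\ell+1)$ or $-(2\ell+2)$ coming from $T$, leaving $m\ell+p-1=c(n_p,S)$ when $p\le m$ and $m\ell+(p-m)-2=c(n_p,S)$ when $p>m$; either way the asserted equality holds, and since every $i$ arises as some $n_p$ in some brick this proves the statement. I expect no genuine obstacle beyond this bookkeeping; the one thing to be careful about is the convention for $\rho$ --- it must be the one with $\rho_1=2m$, $\rho_{2m}=1$, i.e. $\rho_p=\#\{q:n_q\le n_p\}$, which is exactly the convention forced by the requirement that the stacked bricks assemble into a genuine RSYT $T$.
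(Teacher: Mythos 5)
Your proof is correct and takes essentially the same route as the paper's own argument: locate $i$ as $n_p$ in its brick $B_\ell$, compute $r_\beta(i)=2m(k-1-\ell)+\rho_p$, identify the cell of $T$ containing that rank via Definition \ref{S2BT}, and match the contents on both sides. Your closing remark about the convention $\rho_p=\#\{q:n_q\le n_p\}$ is also the convention the paper actually uses in its proof (the displayed definition with $\geq$ is a typo, as the stated values $\rho_1=2m$, $\rho_{2m}=1$ confirm).
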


\begin{proof}
Suppose $i$ is in brick $B_{\ell}$ and $i=n_{j}$ for some $j$. If $1\leq j\leq
m$ then $\operatorname{row}\left(  i,S\right)  =1$ and $c\left(  i,S\right)
=m\ell+j-1$, while if $m<j\leq2m$ then $\operatorname{row}\left(  i,S\right)
=2$ and $c\left(  i,S\right)  =m\ell+j-m-2$. Then \thinspace$\#\left\{
s:\beta_{s}=\beta_{i}=\ell,s\leq i\right\}  =\left\{  s:n_{s}\leq
n_{j}\right\}  =\rho_{j}$. Also $\#\left\{  s:\beta_{s}>\ell\right\}
=2m\left(  k-1-\ell\right)  $ and thus $r_{\beta}\left(  i\right)  =\rho
_{j}+2m\left(  k-1-\ell\right)  $.

If $1\leq j\leq m$ then $T\left[  2\ell+1,j\right]  =r_{\beta}\left(
i\right)  $, $c\left(  r_{\beta}\left(  i\right)  ,T\right)  =j-2\ell-1$ and
$\left(  m+2\right)  \beta_{i}+c\left(  r_{\beta}\left(  i,T\right)  \right)
\allowbreak=m\ell+j-1=c\left(  i,S\right)  $. If $m+1\leq j\leq2m$ then
$T\left[  2\ell+2,j-m\right]  =r_{\beta}\left(  i\right)  $, $c\left(
r_{\beta}\left(  i\right)  ,T\right)  =j-m-2\ell-2$ and $\left(  m+2\right)
\beta_{i}+c\left(  r_{\beta}\left(  i,T\right)  \right)  \allowbreak
=m\ell+\left(  j-m\right)  -2=c\left(  i,S\right)  $.
\end{proof}

Here is an example for $m=3,k=3,\kappa=\frac{1}{5}$%
\begin{align*}
S  &  =%
\begin{bmatrix}
18 & 17 & 13 & 14 & 10 & 8 & 7 & 6 & 3\\
16 & 15 & 12 & 11 & 9 & 5 & 4 & 2 & 1
\end{bmatrix}
,\\
\beta\left\{  S\right\}   &  =\left(
2,2,2,2,1,2,2,1,1,1,1,0,0,1,0,0,0,0\right)
\end{align*}
with the values of the local rank $\rho$%
\[%
\begin{bmatrix}
6 & 5 & 2\\
4 & 3 & 1
\end{bmatrix}
,%
\begin{bmatrix}
6 & 4 & 2\\
5 & 3 & 1
\end{bmatrix}
,%
\begin{bmatrix}
6 & 5 & 3\\
4 & 2 & 1
\end{bmatrix}
,
\]
now add $12,6,0$ respectively and combine to form%
\[
T\left\{  S\right\}  =%
\begin{bmatrix}
18 & 17 & 14\\
16 & 15 & 13\\
12 & 10 & 8\\
11 & 9 & 7\\
6 & 5 & 3\\
4 & 2 & 1
\end{bmatrix}
.
\]
For example $\beta_{10}=1,r_{\beta}\left(  10\right)  =10$ and $c\left(
10,T\right)  =-1$ thus $\zeta_{\beta,T}^{\prime}\left(  10\right)
=5-1=4=c\left(  10,S\right)  $.

Essentially what is left to do for the singularity proofs is to show\linebreak%
\ $\mathrm{span}\left\{  J_{\pi\left\{  S\right\}  }:S\in\mathcal{Y}\left(
\sigma\right)  \right\}  $ is closed under $\left\{  s_{i}:1\leq
i<2mk\right\}  $ and that $\omega_{i}J_{\pi\left\{  S\right\}  }=c\left(
i,S\right)  J_{\pi\left\{  S\right\}  }$ for all $i$. Here is a small example
of the impending difficulty: let $m=2,k=2$ $\left(  \kappa=\frac{1}{4}\right)
$ and%
\begin{align*}
S  &  =%
\begin{bmatrix}
8 & 6 & 5 & 2\\
7 & 4 & 3 & 1
\end{bmatrix}
,T=T\left\{  S\right\}  =%
\begin{bmatrix}
8 & 6\\
7 & 5\\
4 & 2\\
3 & 1
\end{bmatrix}
,\\
\beta &  =\beta\left\{  S\right\}  =\left(  1,1,1,0,1,0,0,0\right)  .
\end{align*}
What is the result of applying $s_{5}$? Interchanging $5$ and $6$ in $S$
results in a tableau violating the condition of decreasing entries in each row
(thus outside the span), and the general transformation formula (\ref{igti1})
($\beta_{5}>\beta_{6}$) says%
\[
\left(  s_{5}-b_{\beta,T}\left(  5\right)  \right)  J_{\beta,T}=\left(
1-b_{\beta,T}\left(  5\right)  ^{2}\right)  J_{s_{5}\beta,T}%
\]
with $s_{5}\beta=\left(  1,1,1,0,0,1,0,0\right)  $ and $b_{\beta,T}\left(
5\right)  ^{-1}=\frac{1}{\kappa}\left(  1-0\right)  +c\left(  4,T\right)
-c\left(  6,T\right)  =\frac{1}{\kappa}+\left(  -2-1\right)  $, thus
$b_{\beta,T}\left(  5\right)  =1$ at $\kappa=\frac{1}{4}$. To show that the
formula gives $s_{5}J_{\beta,T}=J_{\beta,T}$ it is necessary to show
$J_{s_{5}\beta,T}$ has no pole at $\kappa=\frac{1}{4}$. These proofs comprise
a large part of the sequel.

\section{Action of Jucys-Murphy elements\label{AJM}}

The Jucys-Murphy elements satisfy $s_{j}\omega_{i}=\omega_{i}s_{j}$ for $j\neq
i-1,i$ and $s_{i}\omega_{i}s_{i}=\omega_{i+1}+s_{i}$ for $i<2mk$.

Suppose there is a subset $\mathcal{Z}\subset\mathbb{N}_{0}^{2mk}%
\times\mathcal{Y}\left(  \tau\right)  $ with the properties (spectral vectors
at $\kappa=\frac{1}{m+2}$, recall $b_{\alpha,T}\left(  i\right)  =\left(
\zeta_{\alpha,T}^{\prime}\left(  i\right)  -\zeta_{\alpha,T}^{\prime}\left(
i+1\right)  \right)  ^{-1}$):

\begin{enumerate}
\item $\left(  \beta,T\right)  \in\mathcal{Z}$ and $\left\vert b_{\beta
,T}\left(  i\right)  \right\vert \leq\frac{1}{2}$ implies $\left(
s_{i}-b_{\beta,T}\left(  i\right)  \right)  J_{\beta,T}=\gamma J_{\beta
^{\prime},T^{\prime}}$ for some $\gamma\neq0$ and $\left(  \beta^{\prime
},T^{\prime}\right)  \in\mathcal{Z}$; also $\zeta_{\beta^{\prime},T^{\prime}%
}^{\prime}=s_{i}\zeta_{\beta,T}^{\prime}$;

\item $\left(  \beta,T\right)  \in\mathcal{Z}$ and $b_{\beta,T}\left(
i\right)  =\pm1$ implies $s_{i}J_{\beta,T}=b_{\beta,T}\left(  i\right)
J_{\beta,T}$;

\item $\left(  \beta,T\right)  \in\mathcal{Z}$ implies $\beta_{2mk}=0$ and
thus $\zeta_{\beta,T}^{\prime}\left(  2mk\right)  =0$.
\end{enumerate}

The following is a basic theorem on representations of $\mathcal{S}_{N}$ and
we sketch the proof.

\begin{theorem}
\label{J2S}If $\mathcal{Z}\subset\mathbb{N}_{0}^{2mk}\times\mathcal{Y}\left(
\tau\right)  $ satisfies these properties then $\left(  \beta,T\right)
\in\mathcal{Z}$ implies $\omega_{i}J_{\beta,T}=\zeta_{\beta,T}^{\prime}\left(
i\right)  J_{\beta,T}$ for $1\leq i\leq2mk$.
\end{theorem}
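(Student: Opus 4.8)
The plan is to show that the subalgebra of $\mathbb{R}\mathcal{S}_{2mk}$ generated by the Jucys–Murphy elements $\{\omega_i\}$ acts diagonally on each $J_{\beta,T}$ with $(\beta,T)\in\mathcal{Z}$, and to pin down the eigenvalues as $\zeta_{\beta,T}^{\prime}$. The natural engine is the classical Young/Okounkov–Vershik picture: the commutative algebra $\langle\omega_1,\dots,\omega_N\rangle$ acts semisimply on any finite-dimensional $\mathcal{S}_N$-module, its joint eigenvalues on an irreducible module of shape $\sigma$ are exactly the content vectors of RSYT of shape $\sigma$, and the simple reflections $s_i$ move these joint eigenspaces in the manner dictated by $s_i\omega_i s_i=\omega_{i+1}+s_i$. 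So the structure of the proof is: first work inside the finite-dimensional $\mathcal{S}_{2mk}$-module $M:=\mathrm{span}\{J_{\beta,T}:(\beta,T)\in\mathcal{Z}\}$ (which is $\mathcal{S}_{2mk}$-invariant by properties (1) and (2)), decompose it into isotypic components, and then use the three hypotheses to show that the ``would-be content vector'' $\zeta_{\beta,T}^{\prime}$ is in fact the genuine $\omega$-eigenvalue of $J_{\beta,T}$.

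First I would fix $(\beta,T)\in\mathcal{Z}$ and induct downward on $i$ from $i=2mk$. The base case $i=2mk$ is property (3) together with $\omega_{2mk}=0$: indeed $\omega_{2mk}J_{\beta,T}=0=\zeta_{\beta,T}^{\prime}(2mk)J_{\beta,T}$. For the inductive step, assume $\omega_{i+1}J_{\beta',T'}=\zeta_{\beta',T'}^{\prime}(i+1)J_{\beta',T'}$ for every $(\beta',T')\in\mathcal{Z}$ (this is the point of carrying the induction over the whole set $\mathcal{Z}$ at once, not a single pair), and deduce the statement for index $i$. Apply $s_i$ to $\omega_{i+1}$ via $s_i\omega_{i+1}s_i = \omega_i + $ (rearranged form of the relation $s_i\omega_i s_i=\omega_{i+1}+s_i$, namely $\omega_i = s_i\omega_{i+1}s_i + s_i$). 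Now split into the three cases governed by $b_{\beta,T}(i)$: if $|b_{\beta,T}(i)|\le\frac12$ use property (1), so $s_iJ_{\beta,T}=b_{\beta,T}(i)J_{\beta,T}+\gamma J_{\beta',T'}$ with $\zeta_{\beta',T'}^{\prime}=s_i\zeta_{\beta,T}^{\prime}$; if $b_{\beta,T}(i)=\pm1$ use property (2), $s_iJ_{\beta,T}=\pm J_{\beta,T}$; the remaining possibility $|b_{\beta,T}(i)|>\frac12$ but $b_{\beta,T}(i)\ne\pm1$ cannot occur for content-type differences — $\zeta_{\beta,T}^{\prime}(i)-\zeta_{\beta,T}^{\prime}(i+1)$ specialized at $\kappa=\tfrac1{m+2}$ is an integer when $\beta_i=\beta_{i+1}$ and has absolute value $\ge 2$ otherwise by the valid-transformation hypothesis, so one gets a clean trichotomy. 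In each case, substitute $s_iJ_{\beta,T}$ into $\omega_iJ_{\beta,T}=(s_i\omega_{i+1}s_i+s_i)J_{\beta,T}$, use the inductive hypothesis on $\omega_{i+1}$ (applied to $s_iJ_{\beta,T}$, which lies in $M$), and collect terms; the algebra is exactly the standard Young's-orthogonal-form computation and yields $\omega_iJ_{\beta,T}=\zeta_{\beta,T}^{\prime}(i)J_{\beta,T}$, the cross terms cancelling because of the relation $(s_i+b)(s_i-b)=1-b^2$ already noted in Subsection~\ref{Trans} and because $\zeta_{\beta',T'}^{\prime}(i+1)=\zeta_{\beta,T}^{\prime}(i)$.

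The main obstacle, and the place where the hypotheses on $\mathcal{Z}$ are doing real work, is closure: one must know that every time the recursion produces $s_iJ_{\beta,T}$ it stays inside $M=\mathrm{span}\{J_{\pi\{S\}}\}$ and, crucially, that the partner polynomial $J_{\beta',T'}$ appearing in property (1) is itself \emph{pole-free at} $\kappa=\tfrac1{m+2}$, so that the transformation formulas of Subsection~\ref{Trans} are legitimate after specialization. Property (1) builds this in by fiat — it asserts $(\beta',T')\in\mathcal{Z}$ and asserts the relation holds — but the honest content of the theorem is that \emph{if} such a $\mathcal{Z}$ exists then the conclusion follows; the existence of $\mathcal{Z}$ (i.e. the pole-freeness and closure) is precisely what Sections~\ref{RedThm} and~\ref{Uniq} are devoted to. So in the proof of this theorem I would treat closure as given and concentrate on the eigenvalue bookkeeping, making explicit that the downward induction on $i$ must range over all of $\mathcal{Z}$ simultaneously so that the hypothesis is available for $s_iJ_{\beta,T}$, and noting that property (3) is exactly what anchors the induction. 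A final remark worth including: once $\omega_iJ_{\beta,T}=\zeta_{\beta,T}^{\prime}(i)J_{\beta,T}$ for all $i$, the representation theory of $\mathcal{S}_{2mk}$ quoted earlier forces $[\zeta_{\beta,T}^{\prime}(i)]_i$ to be the content vector of a genuine RSYT, which (combined with the construction $\pi\{S\}$ and its uniqueness from Section~\ref{Uniq}) identifies the isotype as $\sigma=(mk,mk)$ and sets up the singularity conclusion $\mathcal{D}_iJ_{\beta,T}=0$ via $\mathcal{U}_i^{\prime}=\tfrac1\kappa x_i\mathcal{D}_i+\omega_i$.
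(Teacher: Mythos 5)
Your proposal is correct and follows essentially the same route as the paper: a downward induction on $i$ carried over all of $\mathcal{Z}$ at once, anchored at $i=2mk$ by property (3) and $\omega_{2mk}=0$, and at each step using $\omega_i=s_i\omega_{i+1}s_i+s_i$ together with properties (1)--(2), the cross terms cancelling precisely because $\zeta_{\beta^{\prime},T^{\prime}}^{\prime}\left(  i+1\right)  =\zeta_{\beta,T}^{\prime}\left(  i\right)  $ and $b_{\beta,T}\left(  i\right)  =\left(  \zeta_{\beta,T}^{\prime}\left(  i\right)  -\zeta_{\beta,T}^{\prime}\left(  i+1\right)  \right)  ^{-1}$. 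The paper simply writes out the two-case eigenvalue computation explicitly where you appeal to the standard Young-orthogonal-form bookkeeping, so there is no substantive difference in approach.
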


\begin{proof}
Arguing by induction suppose $\omega_{j}J_{\beta,T}=\zeta_{\beta,T}^{\prime
}\left(  j\right)  J_{\beta,T}$ for all $\left(  \beta,T\right)
\in\mathcal{Z}$ and $i<j\leq2mk$. The start $i=2mk-1$ is given in the
hypotheses. Let $\left(  \beta,T\right)  \in\mathcal{Z}$ and suppose that
$b_{\beta,T}\left(  i\right)  =\pm1$, then%
\begin{align*}
\omega_{i}J_{\beta,T}  &  =\left(  s_{i}\omega_{i+1}s_{i}+s_{i}\right)
J_{\beta,T}=\left\{  b_{\beta,T}\left(  i\right)  ^{2}\zeta_{\beta,T}^{\prime
}\left(  i+1\right)  +b_{\beta,T}\left(  i\right)  \right\}  J_{\beta,T}\\
&  =\left\{  \zeta_{\beta,T}^{\prime}\left(  i+1\right)  +b_{\beta,T}\left(
i\right)  \right\}  J_{\beta,T}=\zeta_{\beta,T}^{\prime}\left(  i\right)
J_{\beta,T}.
\end{align*}
Next suppose $\left\vert b_{\beta,T}\left(  i\right)  \right\vert \leq\frac
{1}{2}$ and set $p=\gamma J_{\beta^{\prime},T^{\prime}}=\left(  s_{i}%
-b_{\beta,T}\left(  i\right)  \right)  J_{\beta,T}$, thus $\omega_{i+1}%
p=\zeta_{\beta,T}^{\prime}\left(  i\right)  p$ (inductive hypothesis). Then%
\begin{align*}
\omega_{i}J_{\beta,T}  &  =\left(  s_{i}\omega_{i+1}+1\right)  \left(
p+b_{\beta,T}\left(  i\right)  J_{\beta,T}\right) \\
&  =\left(  \zeta_{\beta,T}^{\prime}\left(  i\right)  s_{i}+1\right)
p+b_{\beta,T}\left(  i\right)  \left(  \zeta_{\beta,T}^{\prime}\left(
i+1\right)  s_{i}+1\right)  J_{\beta,T}\\
&  =\left\{  \left(  \zeta_{\beta,T}^{\prime}\left(  i\right)  s_{i}+1\right)
\left(  s_{i}-b_{\beta,T}\left(  i\right)  \right)  +b_{\beta,T}\left(
i\right)  \left(  \zeta_{\beta,T}^{\prime}\left(  i+1\right)  s_{i}+1\right)
\right\}  J_{\beta,T}\\
&  =\left\{  1-\zeta_{\beta,T}^{\prime}\left(  i\right)  b_{\beta,T}\left(
i\right)  +b_{\beta,T}\left(  i\right)  \zeta_{\beta,T}^{\prime}\left(
i+1\right)  \right\}  s_{i}J_{\beta,T}+\zeta_{\beta,T}^{\prime}\left(
i\right)  J_{\beta,T}\\
&  =\zeta_{\beta,T}^{\prime}\left(  i\right)  J_{\beta,T}.
\end{align*}
This completes the induction.
\end{proof}

We want to show that $\left\{  \left(  \beta\left\{  S\right\}  ,T\left\{
S\right\}  \right)  :S\in\mathcal{Y}\left(  \sigma\right)  \right\}  $ (as in
Definition \ref{S2BT}) satisfies the hypotheses of Theorem \ref{J2S}. From the
construction it is clear that $\beta\left\{  S\right\}  _{2mk}=0$ because
$S\left[  1,1\right]  =2mk$ and this cell is in $B_{0}$. Fix $S\in
\mathcal{Y}\left(  \sigma\right)  $ and $i<2mk$. Abbreviate $\beta
=\beta\left\{  S\right\}  ,T=T\left\{  S\right\}  ,b=b_{\beta,T}\left(
i\right)  $. There are several cases:

\begin{enumerate}
\item $\left\vert c\left(  i,S\right)  -c\left(  i+1,S\right)  \right\vert
\geq2$ then $\left\vert b\right\vert \leq\frac{1}{2}$ and $\left(
s_{i}-b\right)  J_{\beta_{,T}}=\gamma J_{\beta^{\prime},T^{\prime}}$ where
$\gamma\neq0$ and $\zeta_{\beta,T}^{\prime}\left(  j\right)  =c\left(
j,S^{\left(  i\right)  }\right)  $ for all $j.$Specifically if $\beta_{i}%
\neq\beta_{i+1}$ implying that $i$ and $i+1$ are in different bricks then
$\beta^{\prime}=s_{i}\beta$ and $T^{\prime}=T$, while if $\beta_{i}%
=\beta_{i+1}=\ell$ then $i,i+1\in B_{\ell}$ and $T^{\prime}$ is formed from
$T$ by transforming the part of $T$ in $B_{\ell}$ interchanging $r_{\beta
}\left(  i\right)  $ and $r_{\beta}\left(  i\right)  +1.$

\item $c\left(  i,S\right)  -c\left(  i+1,S\right)  =-1$ ($\operatorname{col}%
\left(  i,S\right)  =\operatorname{col}\left(  i+1,S\right)  $) then by
construction $\beta_{i}=\beta_{i+1}=\ell$ and $i,i+1\in B_{\ell}$; suppose
that $i+1=n_{j}$ in the notation of Definition \ref{S2BT}. By hypothesis
$i=n_{j+m}$, $\rho_{j+m}=\rho_{j}-1$. Then $T\left[  2\ell+1.j\right]
=\rho_{j}+2\left(  k-1-\ell\right)  m$ and $T\left[  2\ell+2.j\right]
=T\left[  2\ell+1.j\right]  -1.$ This implies $\operatorname{col}\left(
r_{\beta}\left(  i\right)  ,T\right)  =\operatorname{col}\left(  r_{\beta
}\left(  i\right)  +1,T\right)  $.. By (\ref{ieqi1}) $s_{i}J_{\beta
,T}=-J_{\beta,T}$.

\item $c\left(  i,S\right)  -c\left(  i+1,S\right)  =1$ $\left(
\operatorname{row}\left(  i,S\right)  =\operatorname{row}\left(  i+1,S\right)
\right)  $ and $\beta_{i}=\beta_{i+1}=\ell$; then $i,i+1\in B_{\ell}$; using
Definition \ref{S2BT} $i=n_{j}$ and $i+1=n_{j-1}$ for some $j$ with $2\leq
j\leq m$ or $m+2\leq j\leq2m$, and $\rho_{j-1}=\rho_{j}+1.$ Thus $r_{\beta
}\left(  i\right)  =\rho_{j}+2\left(  k-1-\ell\right)  m$. In the first case
$T\left[  2\ell+1.j\right]  =T\left[  2\ell+1.j-1\right]  -1$ and in the
second case $T\left[  2\ell+2.j-m\right]  =T\left[  2\ell+2.j-1-m\right]  -1$
and thus $s_{i}J_{\beta,T}=J_{\beta,T}$. by (\ref{ieqi1}).

\item $c\left(  i,S\right)  -c\left(  i+1,S\right)  =1$ $\left(
\operatorname{row}\left(  i,S\right)  =\operatorname{row}\left(  i+1,S\right)
\right)  $ and $\beta_{i}>\beta_{i+1}$ then $i+1\in B_{\ell-1},i\in B_{\ell}$
(because the entries of $S$ are decreasing in each row). Thus $i+1$ is in
position $n_{m}$ or $n_{2m}$ of $B_{\ell-1}$ and $i$ is $n_{1}$ or $n_{m+1}$
respectively of $B_{\ell}$. The relevant transformation formula is in
\ref{igti1} : $\left(  s_{i}-b_{\beta,T}\left(  i\right)  \right)  J_{\beta
,T}=\left(  1-b_{\beta,T}\left(  i\right)  ^{2}\right)  J_{s_{i}\beta,T}$. To
allow $\kappa=\frac{1}{m+2}$ in this equation and conclude $\left(
s_{i}-b_{\beta,T}\left(  i\right)  \right)  J_{\beta,T}=0$ it is necessary to
show $J_{s_{i}\beta,T}$ has no poles there.
\end{enumerate}

To complete the proof that $\omega_{i}J_{\beta\left\{  S\right\}  ,T\left\{
S\right\}  }=c\left(  i,S\right)  J_{\beta\left\{  S\right\}  ,T\left\{
S\right\}  }$ for $1\leq i\leq2mk$ and $S\in\mathcal{Y}\left(  \sigma\right)
$ (at $\kappa=\frac{1}{m+2}$) we will show each $J_{\beta\left\{  S\right\}
,T\left\{  S\right\}  }$ and $J_{s_{i}\beta,T}$ (as described in (4) above)
has no poles at $\kappa=\frac{1}{m+2}$. In the next section we show that it
suffices to analyze $1+2\left(  k-1\right)  $ specific tableaux.

\section{Reduction theorems\label{RedThm}}

Suppose some $J_{\beta,T}$ has been shown to be defined at $\kappa=\frac
{1}{m+2}$ (no poles) and $\left\vert \zeta_{\beta,T}^{\prime}\left(  i\right)
-\zeta_{\beta,T}^{\prime}\left(  i+1\right)  \right\vert \geq2$ then
$J_{\beta^{\prime},T^{\prime}}$ where $\zeta_{\beta^{\prime},T^{\prime}%
}^{\prime}=s_{i}\zeta_{\beta,T}^{\prime}$ is also defined (recall $\left(
s_{i}-b_{\beta,T}\left(  i\right)  \right)  J_{\beta,T}$ is a nonzero multiple
of $J_{s_{i}\beta,T}$ if $\beta_{i}\neq\beta_{i+1}$ or of $J_{\beta,T^{\left(
j\right)  }}$ if $\beta_{i}=\beta_{i+1}$ and $j=r_{\beta}\left(  i\right)  $.)
and the process is invertible. In other words if $\zeta_{\beta,T}^{\prime}$ is
a valid spectral vector and $\left\vert \zeta_{\beta,T}^{\prime}\left(
i\right)  -\zeta_{\beta,T}^{\prime}\left(  i+1\right)  \right\vert \geq2$ then
$s_{i}\zeta_{\beta,T}^{\prime}$ is also a valid spectral vector
(\textit{valid}. means that there is a NSJP with that spectral vector and it
has no pole at $\kappa=\frac{1}{m+2}$).

We consider column-strict tableaux $S$ of shape $\sigma$ which are either RSYT
or $S$ differs by one row-wise transposition from being an RSYT. Their content
vectors are used in the argument. \textit{Column-strict} means that the
entries in each column are decreasing.

\begin{definition}
Suppose $1\leq n<m$ and $j=1,2$ then $\mathcal{R}_{j,n}$ is the set of
tableaux $S$ of shape $\sigma$ such that $S$ is column-strict and $S^{\prime}$
defined by $S^{\prime}\left[  j,n\right]  =S\left[  j,n+1\right]  ,S^{\prime
}\left[  j,n+1\right]  =S\left[  j,n\right]  $ and $S^{\prime}\left[
a,b\right]  =S\left[  a,b\right]  $ for $b\neq n,n+1$ is an RSYT.
\end{definition}

Suppose $\zeta_{\beta,T}^{\prime}\left(  u\right)  =c\left(  u,S\right)  $ for
$1\leq u\leq2mk$, $\operatorname{row}\left(  i,S\right)  =2,\operatorname{row}%
\left(  i+1,S\right)  =1$ and $\operatorname{col}\left(  i,S\right)
<\operatorname{col}\left(  i+1,S\right)  $ then $\zeta_{\beta,T}^{\prime
}\left(  i\right)  -\zeta_{\beta,T}^{\prime}\left(  i+1\right)  \leq-2$ and
$s_{i}\zeta_{\beta,T}^{\prime}$ is a spectral vector associated with
$S^{\left(  i\right)  }$. Call this a \textit{permissible step}. In fact the
inequality $\zeta_{\beta,T}^{\prime}\left(  i\right)  -\zeta_{\beta,T}%
^{\prime}\left(  i+1\right)  \leq-2$ is equivalent to the row and column
property just stated. If $S\in\mathcal{Y}\left(  \sigma\right)  $ then
$S^{\left(  i\right)  }\in\mathcal{Y}\left(  \sigma\right)  $ and if
$S\in\mathcal{R}_{j,n}$ then $S^{\left(  j\right)  }\in\mathcal{R}_{j,n}$.
(because any row or column orderings do not change). For counting permissible
steps we define
\begin{equation}
\mathrm{inv}\left(  S\right)  =\#\left\{  \left(  a,b\right)
:a<b,\operatorname{row}\left(  a,S\right)  <\operatorname{row}\left(
b,S\right)  \right\}  \label{invS}%
\end{equation}
A permissible step $S\rightarrow S^{\left(  i\right)  }$ adds $1$ to
$\mathrm{inv}\left(  S\right)  $. The reduction process aims to apply
permissible steps until a inv-maximal tableau is reached. In $\mathcal{Y}%
\left(  \sigma\right)  $ the inv-maximal element is $S_{0}$ and $\mathrm{inv}%
\left(  S_{0}\right)  =\binom{mk}{2}$ .

\begin{definition}
\label{defSjn}For $1\leq n<m$ and $j=1,2$ define a distinguished element
$S_{\left(  j,n\right)  }$ of $\mathcal{R}_{j,n}$ by $S_{\left(  j,n\right)
}\left[  1,i\right]  =2mk+2-2i\,$, $S_{\left(  j,n\right)  }\left[
2,i\right]  =2mk+1-2i$ for $i\neq n,n+1$ and for $a=1,2,~b=n,n+1$
\begin{align*}
\left\{  S_{\left(  1,n\right)  }\left[  a,b\right]  \right\}   &  =%
\begin{Bmatrix}
2mk-2n+1 & 2mk-2n+2\\
2mk-2n & 2mk-2n-1
\end{Bmatrix}
,\\
\left\{  S_{\left(  2,n\right)  }\left[  a,b\right]  \right\}   &  =%
\begin{Bmatrix}
2mk-2n+2 & 2mk-2n+1\\
2mk-2n-1 & 2mk-2n
\end{Bmatrix}
.
\end{align*}

\end{definition}

Then $\mathrm{inv}S_{\left(  j,n\right)  }=\binom{mk}{2}-1$. Here are two
examples with $mk=6$:%
\[
S_{\left(  1,3\right)  }=%
\begin{bmatrix}
12 & 10 & 7 & 8 & 4 & 2\\
11 & 9 & 6 & 5 & 3 & 1
\end{bmatrix}
,S_{\left(  2,2\right)  }=%
\begin{bmatrix}
12 & 10 & 9 & 6 & 4 & 2\\
11 & 7 & 8 & 5 & 3 & 1
\end{bmatrix}
.
\]
Any $S\in\mathcal{Y}\left(  \sigma\right)  $ can be transformed by a sequence
of permissible steps to $S_{0}$ (this is a basic fact in representation theory
but the explanation is useful to motivate the argument for $\mathcal{R}_{j,n}%
$), and any $S\in\mathcal{R}_{j,n}$ can be transformed in this way to
$S_{\left(  j,n\right)  }$. For convenience replace $mk$ by $N$ since only the
number of columns is relevant. Suppose $S\in\mathcal{Y}\left(  \sigma\right)
$ and by permissible steps has been transformed to $S^{\prime}$ with
$S^{\prime}\left[  1,i\right]  =2N+2-2i,S^{\prime}\left[  2,i\right]
=2N+1-2i$ for $i\leq r<N-1$ (the inductive argument starts with $r=0$). From
the definition of $\mathcal{Y}\left(  \sigma\right)  $ it follows that
$S^{\prime}\left[  1,i\right]  <S^{\prime}\left[  1,2r+1\right]  $ and
$S^{\prime}\left[  2,i\right]  <S^{\prime}\left[  2,r+1\right]  <S^{\prime
}\left[  1,r+1\right]  $ for all $i>r+1.$ This implies $S^{\prime}\left[
1,r+1\right]  =2N-2r$ and $S^{\prime}\left[  2,r+1\right]  =2N-2r-u$ with
$u\geq1$. Then the list of entries $\left[  S^{\prime}\left[  1,\ell\right]
\right]  _{\ell=r+1}^{r+u}$ equals $\left[  2N-2r,2N-2r-2,\ldots
,2N-2r-u+1\right]  $ Apply $s_{2N-2r-u},s_{2N-2r-u+1},\ldots,s_{2N-2r-2}$ in
this order (if $u=1$ then already done). Each one is a permissible step, with
$t$ in $\left[  2,r+1\right]  $ and $t+1$ in $\left[  2,n^{\prime}\right]  $
with $n^{\prime}>r+1$ . This produces $S^{\prime\prime}$ satisfying
$S^{\prime\prime}\left[  1,i\right]  =2N+2-2i,S^{\prime^{\prime}}\left[
2,i\right]  =2N+1-2i$ for $i\leq r+1<N$. The induction stops at $r+1=N-1$.

Suppose $S\in\mathcal{R}_{j,n}$ and by permissible steps has been transformed
to $S^{\prime}$ with $S^{\prime}\left[  1,i\right]  =2N+2-2i,S^{\prime}\left[
2,i\right]  =2N+1-2i$ for $i\leq r<n-1$ (the inductive argument starts with
$r=0$). From the definition of $\mathcal{R}_{j,n}$ it follows that $S^{\prime
}\left[  1,i\right]  <S^{\prime}\left[  1,2r+1\right]  $ and $S^{\prime
}\left[  2,i\right]  <S^{\prime}\left[  2,r+1\right]  <S^{\prime}\left[
1,r+1\right]  $ for all $i>r+1.$ This implies $S^{\prime}\left[  1,r+1\right]
=2N-2r$ and $S^{\prime}\left[  2,r+1\right]  =2N-2r-u$ with $u\geq1$. The
numbers $2N-2r-u+1,2N-2r-u+2,\ldots,2N-2r$ are in $\left\{  S^{\prime}\left[
1,\ell\right]  :r+1\leq\ell\leq r+u\right\}  $. As in the RSYT case apply
$s_{2N-2r-u},s_{2N-2r-u+1},\ldots,s_{2N-2r-2}$ in this order (if $u=1$ then
already done). It is possible that one pair of adjacent entries is out of
order (when $j=1$) but the argument is still valid. Here is a small example
with $N=4,n=2,j=1,r=0$.%
\[
S=%
\begin{bmatrix}
8 & 6 & 7 & 3\\
5 & 4 & 2 & 1
\end{bmatrix}
\overset{s_{5}}{\rightarrow}%
\begin{bmatrix}
8 & 5 & 7 & 3\\
6 & 4 & 2 & 1
\end{bmatrix}
\overset{s_{6}}{\rightarrow}%
\begin{bmatrix}
8 & 5 & 6 & 3\\
7 & 4 & 2 & 1
\end{bmatrix}
.
\]
The inductive process can be continued until $r=n-1$ and the result is a
tableau $S^{\prime}\in\mathcal{R}_{j,n}$ with the entries $2N-2n+3,\ldots,2N$
in the first $n-1$ columns. Thus the entries $1,2,\ldots,2N-2n+2$ are in the
remaining $N-n+1$ columns.

The next part of the process is to start from the last column and work
forward. Suppose $S^{\prime}$ by permissible steps has been transformed to
$S^{\prime\prime}$ with $S^{\prime\prime}\left[  1,N+1-i\right]
=2i,S^{\prime\prime}\left[  2,N+1-i\right]  =2i-1$ for $i\leq r<N-n-2$ (the
first step is with $r=0$). As before $S^{\prime\prime}\left[  1,i\right]
>S^{\prime\prime}\left[  1,N-r\right]  >S^{\prime\prime}\left[  2,N-r\right]
$ and $S^{\prime\prime}\left[  2,i\right]  >S^{\prime\prime}\left[
2,N-r\right]  $ for $i\leq N-r-1$. This implies $S^{\prime\prime}\left[
2,N-r\right]  =2r+1$ and $S^{\prime\prime}\left[  1,N-r\right]  =2r+1+u$ with
$u\geq1$. The numbers $2r+1,2r+2,\ldots,2r+u$ are in $\left\{  S^{\prime
\prime}\left[  2,\ell\right]  :N-r-u+1\leq\ell\leq N-r\right\}  $. This range
of cells has contents $N-r-u-1,N-r-u,\ldots,N-r-3$ (excluding $\left[
2,N-r\right]  $). Possibly one pair of adjacent entries is out of order (when
$j=2$). In terms of contents while $c\left(  2r+1+u,S^{\prime\prime}\right)
=N-r-1$ so the steps $s_{2r+u},s_{2r+u-1},\ldots,s_{2r+u}$ are permissible in
that order resulting in $S^{\prime\prime\prime}$ with $S^{\prime\prime\prime
}\left[  1,N-r\right]  =2r+2$. The process stops at $N-r=n+2$. The result is
$S^{\prime\prime\prime}\left[  1,i\right]  =2N+2-2i,S^{\prime\prime\prime
}\left[  2,i\right]  =2N+1-2i$ for $1\leq i<n$ and $n+1<i\leq N$. Thus the
entries in columns $n$ and $n+1$ are $2N-1-2n,\ldots,2N+2-2n$. The definition
of $\mathcal{R}_{j,n}$ forces the position of these entries:%
\[
\mathcal{R}_{1,n}:%
\begin{bmatrix}
2N-2n+1 & 2N-2n+2\\
2N-2n & 2N-2n-1
\end{bmatrix}
,\mathcal{R}_{2,n}:%
\begin{bmatrix}
2N-2n+2 & 2N-2n+1\\
2N-2n-1 & 2N-2n
\end{bmatrix}
.
\]
Thus we have shown that any $S\in\mathcal{R}_{j,n}$ can be transformed by
permissible steps to $S_{\left(  j,n\right)  }$ an inv-maximal tableau.

In the above example $r=0$ and $u=2$ and the action of $s_{2}$ suffices to
obtain the desired tableau:%
\[
S^{\prime\prime}=%
\begin{bmatrix}
8 & 5 & 6 & 3\\
7 & 4 & 2 & 1
\end{bmatrix}
\overset{s_{2}}{\rightarrow}%
\begin{bmatrix}
8 & 5 & 6 & 2\\
7 & 4 & 3 & 1
\end{bmatrix}
;
\]
no more permissible steps are possible.

In our applications $N=mk$ and $n=\ell m$ with $1\leq\ell\leq k-1$.

\section{Uniqueness theorems\label{Uniq}}

This section starts by showing how uniqueness of spectral vectors is used to
prove that specific Jack polynomials exist for some $\kappa=\kappa_{0}$, that
is, there are no poles there.

\begin{proposition}
Suppose $\left(  \beta,T\right)  \in\mathbb{N}_{0}^{N}\times\mathcal{Y}\left(
\tau\right)  $ has the property that $\left(  \gamma,T^{\prime}\right)
\in\mathbb{N}_{0}^{N}\times\mathcal{Y}\left(  \tau\right)  $, $\gamma
\trianglelefteq\beta$ and $\zeta_{\gamma,T^{\prime}}^{\prime}\left(  i\right)
=\zeta_{\beta,T}^{\prime}\left(  i\right)  $ for $1\leq i\leq N$ at
$\kappa=\kappa_{0}$ implies $\left(  \gamma,T^{\prime}\right)  =\left(
\beta,T\right)  $ then $J_{\beta,T}$ is defined at $\kappa=\kappa_{0}$, in the
sense that the generic expression for $J_{\beta,T}$ can be specialized to
$\kappa=\kappa_{0}$ without poles.
\end{proposition}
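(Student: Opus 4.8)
The plan is to exploit the triangular structure of the nonsymmetric Jack polynomials in (\ref{dNJP}) together with the fact that the operators $\mathcal{U}_i'$ are upper-triangular with respect to the partial order $\rhd$ on compositions, so that the coefficients $v_{\beta,\gamma,T}(\kappa)$ are obtained by solving a triangular linear system whose only possible poles occur where two spectral values coincide. Concretely, I would argue as follows. For generic $\kappa$, expand $J_{\beta,T} = x^\beta\otimes\tau(r_\beta^{-1})T + \sum_{\beta\rhd\gamma} x^\gamma\otimes v_{\beta,\gamma,T}(\kappa)$. Applying $\mathcal{U}_i' - \zeta_{\beta,T}'(i)$ and using that $\mathcal{U}_i'$ acts on the $x^\gamma\otimes(\cdot)$ layer as multiplication by $\zeta_{\gamma,\bullet}'(i)$ plus strictly lower-order terms, one sees that each $v_{\beta,\gamma,T}$ is determined recursively (descending in $\rhd$) by a formula of the shape $v_{\beta,\gamma,T} = (\text{linear combination of previously determined }v\text{'s})/(\zeta_{\beta,T}'(i) - \zeta_{\gamma,T'}'(i))$ for a suitable $i$ depending on the recursion step. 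Hence the denominators that appear are exactly differences $\zeta_{\beta,T}'(i)-\zeta_{\gamma,T'}'(i)$ with $\gamma\lhd\beta$ and $T'\in\mathcal{Y}(\tau)$.

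The next step is to show that the hypothesis rules out all such denominators vanishing at $\kappa=\kappa_0$. This is where one must be a little careful: a denominator vanishing at $\kappa_0$ is not literally a coincidence of full spectral vectors, only of a single coordinate. So I would refine the standard argument: the recursion can be organized (as in \cite{DL2011}) so that at the step producing $v_{\beta,\gamma,T'}$ the relevant pair $(\gamma,T')$ obtained along the way always satisfies $\zeta_{\gamma,T'}'(j) = \zeta_{\beta,T}'(j)$ for all $j$ outside the support of the particular reflection applied, and after finitely many steps one reaches a pair with full spectral vector equal to $\zeta_{\beta,T}'$; by the hypothesis such a pair must equal $(\beta,T)$ itself, so the putative pole is not actually there. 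In other words, the construction of $J_{\beta,T}$ only ever requires dividing by $\zeta_{\beta,T}'(i)-\zeta_{\gamma,T'}'(i)$ in situations where, were this to vanish at $\kappa_0$, one could iterate the transformation formulas of Subsection~\ref{Trans} to produce a second pair $(\gamma,T')\neq(\beta,T)$ with $\gamma\trianglelefteq\beta$ and the same spectral vector at $\kappa_0$ — contradicting uniqueness. Therefore every denominator is nonzero at $\kappa_0$ and the specialization is legitimate.

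The main obstacle I anticipate is precisely bridging the gap between "one coordinate of the spectral vector collides" (which is what literally causes a pole in the recursion) and "the entire spectral vector collides with that of some $\gamma\trianglelefteq\beta$" (which is what the hypothesis forbids). The cleanest way through is to invoke the transformation formulas of \S\ref{Trans}: starting from an alleged pole at the pair $(\beta,T)$ at step $i$, apply $s_i$ (or the appropriate sequence of simple reflections with $|\zeta'(j)-\zeta'(j+1)|\ge 2$ at the intermediate stages) to manufacture a genuine second eigenvector of the family $\{\mathcal{U}_j'\}$ with the same eigenvalues; since these eigenvectors are labelled by pairs in $\mathbb{N}_0^N\times\mathcal{Y}(\tau)$ and the transformations preserve the $\trianglelefteq$-order appropriately, one lands on a pair $(\gamma,T')$ with $\gamma\trianglelefteq\beta$, $\zeta_{\gamma,T'}'=\zeta_{\beta,T}'$ at $\kappa_0$, and $(\gamma,T')\neq(\beta,T)$, the desired contradiction. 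Once this reduction is in place, the rest is the routine triangular-solve bookkeeping and nothing further is needed.
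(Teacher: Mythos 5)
There is a genuine gap, and you located it yourself: the entire argument hinges on the claim that a vanishing denominator at $\kappa=\kappa_{0}$ (a coincidence of a \emph{single} coordinate $\zeta_{\beta,T}^{\prime}\left(  i\right)  =\zeta_{\gamma,T^{\prime}}^{\prime}\left(  i\right)  $ arising in your recursion) can always be upgraded, via chains of simple reflections, to a pair $\left(  \gamma,T^{\prime}\right)  \neq\left(  \beta,T\right)  $ with $\gamma\trianglelefteq\beta$ whose \emph{full} spectral vector agrees with $\zeta_{\beta,T}^{\prime}$ at $\kappa_{0}$. This is asserted, not proven, and it is exactly the hard content; nothing in Subsection \ref{Trans} produces such an upgrade (the transformation formulas relate a pair to its $s_{i}$-neighbours, whose spectral vectors are permutations of the original one, not to an arbitrary lower pair sharing one coordinate). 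Moreover your first-paragraph description of the recursion is itself shaky: single-coordinate coincidences between $\zeta_{\beta,T}^{\prime}$ and lower spectral vectors occur abundantly at rational $\kappa_{0}$ without causing poles, so if the denominators were literally all such differences with a forced index $i$, the statement would fail; the point has to be that for each lower pair only \emph{one} index is needed and that index may be \emph{chosen}.

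That choice is precisely what the hypothesis supplies, and it is how the paper argues — making your bridge unnecessary. By the contrapositive of the uniqueness hypothesis, for every $\left(  \gamma,T^{\prime}\right)  $ with $\gamma\vartriangleleft\beta$ there exists an index $i\left[  \gamma,T^{\prime}\right]  $ with $\zeta_{\beta,T}^{\prime}\left(  i\left[  \gamma,T^{\prime}\right]  \right)  \neq\zeta_{\gamma,T^{\prime}}^{\prime}\left(  i\left[  \gamma,T^{\prime}\right]  \right)  $ at $\kappa_{0}$. The paper then inverts the triangular expansion, writing $x^{\beta}\otimes\tau\left(  r_{\beta}^{-1}\right)  T=J_{\beta,T}+\sum u\left(  \beta,\gamma,T,T^{\prime};\kappa\right)  J_{\gamma,T^{\prime}}$, and applies the operator
\[
\mathcal{T}_{\beta,T}=\prod\limits_{\gamma\vartriangleleft\beta,\,T^{\prime}\in\mathcal{Y}\left(  \tau\right)  }\frac{\mathcal{U}_{i\left[  \gamma,T^{\prime}\right]  }^{\prime}-\zeta_{\gamma,T^{\prime}}^{\prime}\left(  i\left[  \gamma,T^{\prime}\right]  \right)  }{\zeta_{\beta,T}^{\prime}\left(  i\left[  \gamma,T^{\prime}\right]  \right)  -\zeta_{\gamma,T^{\prime}}^{\prime}\left(  i\left[  \gamma,T^{\prime}\right]  \right)  }
\]
which kills every lower $J_{\gamma,T^{\prime}}$ and fixes $J_{\beta,T}$; since the monomial term and the coefficients of $\mathcal{T}_{\beta,T}$ have no poles at $\kappa_{0}$, neither does $J_{\beta,T}=\mathcal{T}_{\beta,T}\left(  x^{\beta}\otimes\tau\left(  r_{\beta}^{-1}\right)  T\right)  $. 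So the correct use of the hypothesis is existential (one good index per lower pair), not the universal statement your recursion would require. Note also the paper's remark that the hypothesis is sufficient but not necessary, and the example in Section \ref{Conclude} where distinct pairs share a spectral vector yet no pole occurs: spectral coincidences and poles are not tied together as tightly as your bridging step presumes, so that step should not be expected to be salvageable without an essentially new argument.
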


\begin{proof}
From the $\vartriangleright$-triangular nature of (\ref{dNJP}) it follows that
the inversion formulas are also triangular, in particular%
\[
x^{\beta}\otimes\tau\left(  r_{\beta}^{-1}\right)  T=J_{\beta,T}+\sum
_{\gamma\vartriangleleft\beta,T^{\prime}\in\mathcal{Y}\left(  \tau\right)
}u\left(  \beta,\gamma,T,T^{\prime};\kappa\right)  J_{\gamma,T^{\prime}},
\]
where $u\left(  \beta,\gamma,T,T^{\prime};\kappa\right)  $ is a rational
function of $\kappa$. By hypothesis for each $\gamma\vartriangleleft\beta$ and
$T^{\prime}\in\mathcal{Y}\left(  \tau\right)  $ there is an index $i\left[
\gamma,T^{\prime}\right]  $ such that $\zeta_{\beta,T}^{\prime}\left(
i\left[  \gamma,T^{\prime}\right]  \right)  \neq\zeta_{\gamma,T^{\prime}%
}^{\prime}\left(  i\left[  \gamma,T^{\prime}\right]  \right)  $ at
$\kappa=\kappa_{0}$. Recall that the generic spectral vector $\zeta
_{\gamma,T^{\prime}}^{\prime}$ uniquely determines $\left(  \gamma,T^{\prime
}\right)  $, since $\left[  \gamma_{i}\right]  _{i=1}^{N}$ is found from the
coefficients of $\frac{1}{\kappa}$ and the remaining terms of $\zeta
_{\gamma,T^{\prime}}^{\prime}$ determine the content vector of $T^{\prime}$.
Define an operator on $\mathcal{P}_{\tau}$ by%
\[
\mathcal{T}_{\beta,T}=\prod\limits_{\gamma\vartriangleleft\beta,T^{\prime}%
\in\mathcal{Y}\left(  \tau\right)  }\frac{\mathcal{U}_{i\left[  \gamma
,T^{\prime}\right]  }^{\prime}-\zeta_{\gamma,T^{\prime}}^{\prime}\left(
i\left[  \gamma,T^{\prime}\right]  \right)  }{\zeta_{\beta,T}^{\prime}\left(
i\left[  \gamma,T^{\prime}\right]  \right)  -\zeta_{\gamma,T^{\prime}}%
^{\prime}\left(  i\left[  \gamma,T^{\prime}\right]  \right)  }.
\]
Then $\mathcal{T}_{\beta,T}$ annihilates each $J_{\gamma,T^{\prime}}$ with
$\gamma\vartriangleleft\beta$ and maps $J_{\beta,T}$ to itself. Thus
$\mathcal{T}_{\beta,T}\left(  x^{\beta}\otimes\tau\left(  r_{\beta}\right)
T\right)  =J_{\beta,T}$ and by construction the right hand side has no poles
at $\kappa=\kappa_{0}$.
\end{proof}

The condition in the Proposition is sufficient, not necessary. There is an
example in the concluding remarks to support this statement.

We introduce a simple tool for the analysis of a pair $\beta,T$, namely the
tableau $\mathcal{X}_{\beta,T}$ with the entries being pairs $\left(
i,\beta_{r_{\beta}\left(  i\right)  ^{-1}}\right)  =\left(  i,\beta_{i}%
^{+}\right)  $ such that the tableau of just the first entries coincides with
$T$, that is, if $T\left[  a,b\right]  =i$ then $\mathcal{X}_{\beta,T}\left[
a,b\right]  =\left(  i,\beta_{i}^{+}\right)  $. As example let
\begin{align*}
T  &  =%
\begin{bmatrix}
12 & 11 & 10 & 6\\
9 & 8 & 5 & 2\\
7 & 4 & 3 & 1
\end{bmatrix}
,\beta=\left(  120201303121\right)  ,\beta^{+}=\left(  332221111000\right) \\
\mathcal{X}_{\beta,T}  &  =%
\begin{bmatrix}
\left(  12,0\right)  & \left(  11,0\right)  & \left(  10,0\right)  & \left(
6,1\right) \\
\left(  9,1\right)  & \left(  8,1\right)  & \left(  5,2\right)  & \left(
2,3\right) \\
\left(  7,1\right)  & \left(  4,2\right)  & \left(  3,2\right)  & \left(
1,3\right)
\end{bmatrix}
.
\end{align*}
The tableau $\mathcal{X}_{\beta,T}$ has order properties: in each row the
first entries decrease and the second entries nondecrease (weakly increase),
and the same holds for each column.

The first part is to assume $\lambda\trianglerighteq\beta$ and $\left(
m+1\right)  \beta_{i}+c\left(  r_{\beta}\left(  i\right)  ,T\right)  =c\left(
i,S_{0}\right)  $ (called the \textit{fundamental equation}) for $1\leq
i\leq2mk$ and to deduce that $\beta=\lambda$ and $T=T_{0}$

Our approach to the uniqueness proofs is to work one brick at a time, and in
each brick alternating between even and odd indices showing the values of
$\beta_{i}$ and $T^{\prime}$ agree with those of $\lambda,T_{0}$.. For each
cell we use the fundamental equation and the order properties of
$\mathcal{X}_{\beta,T}$ to set up inequalities which lead to a contradiction
if $\beta_{i}\neq\lambda_{i}$.

\begin{theorem}
\label{uniqlb!}Suppose $\left(  \beta,T\right)  \in\mathbb{N}_{0}^{2mk}%
\times\mathcal{Y}\left(  \tau\right)  $ , $\beta\trianglelefteq\lambda$ and
$\left(  m+2\right)  \beta_{j}+c\left(  r_{\beta}\left(  j\right)  ,T\right)
=c\left(  j,S_{0}\right)  $ for $1\leq j\leq2mk$ then $\beta=\lambda$ and
$T=T_{0}$.
\end{theorem}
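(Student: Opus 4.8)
The plan is to prove the two equalities $\beta=\lambda$ and $T=T_0$ simultaneously by working brick by brick, from $B_0$ to $B_{k-1}$, and within each brick alternating between the odd-indexed and even-indexed cells, as the paragraph before the theorem indicates. The main bookkeeping tool will be the tableau $\mathcal{X}_{\beta,T}$: its entries are the pairs $\left(i,\beta_i^{+}\right)$ arranged according to $T$, and its order properties (in every row and every column the first coordinates strictly decrease while the second coordinates weakly increase) will be combined with the fundamental equation $\left(m+2\right)\beta_j+c\left(r_\beta\left(j\right),T\right)=c\left(j,S_0\right)$ to trap the value $\beta_j$ between two bounds that force $\beta_j=\lambda_j$.

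First I would record the basic numerical facts: the content vector of $S_0$, which takes the explicit values $c\left(i,S_0\right)=m\ell+s-1$ for the top row and $m\ell+s-2$ for the bottom row of $B_\ell$ (so contents range over the interval $\left[m\ell-1,m\left(\ell+1\right)-1\right]$), and the fact that in $\tau$ every content lies in $\left[-\left(2k-1\right),m-1\right]$, hence $\left|c\left(r_\beta\left(j\right),T\right)\right|\le 2k-1+m$ is bounded independently of the (unknown) part $\beta_j$. Consequently the fundamental equation already pins $\beta_j$ down to $O\left(1\right)$ possibilities once we know which brick $B_\ell$ of $S_0$ the index $j$ sits in; combined with $\beta\trianglelefteq\lambda$ (which controls the partial sums $\sum_{j\le i}\beta_j$) this should be enough to start an induction. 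The inductive hypothesis after processing bricks $B_0,\dots,B_{\ell-1}$ is that $\beta_j=\lambda_j$ and the corresponding cells of $T$ equal those of $T_0$ for every $j$ lying in those bricks; the reverse-standard ordering of $T$ then forces the remaining entries of $T$ (those with smaller values, i.e.\ in bricks $B_\ell,\dots,B_{k-1}$) to occupy exactly the lower rows $2\ell+1,\dots,2k$, so the analysis localizes to a $\tau$-subtableau that is itself a stack of bricks.

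Within brick $B_\ell$ I would argue cell by cell. Take a cell of $S_0$ in $B_\ell$, say with value $i=S_0\left[a,b\right]$; the fundamental equation reads $\left(m+2\right)\beta_i+c\left(r_\beta\left(i\right),T\right)=c\left(i,S_0\right)$, a known integer. Using the order properties of $\mathcal{X}_{\beta,T}$ — comparing the cell holding $i$ in $T$ with its neighbors, whose $\beta^{+}$-values are already known from the induction or from previously treated cells of the same brick — I would bound $c\left(r_\beta\left(i\right),T\right)$ from above and below, hence bound $\beta_i$ from above and below, and show the only consistent value is $\beta_i=\ell=\lambda_i$; then the equation fixes $c\left(r_\beta\left(i\right),T\right)$, which identifies the cell $\left(a,b\right)$ of $T$. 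Doing this first for the odd indices (bottom row of the brick, where $c\left(i,S_0\right)$ is smaller) and then the even indices (or vice versa) lets each half feed the inequalities for the other. The base case is the corner $i=2mk=S_0\left[1,1\right]$, for which $\beta_i=0$ and $r_\beta\left(i\right)=1$ are forced directly, matching Proposition (the one computing $\zeta_{\lambda,T_0}^{\prime}$).

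The step I expect to be the main obstacle is handling the junction between bricks: showing that the largest-value cell of $T$ not yet assigned really must be the top-left corner of the next brick and cannot instead "leak" into an already-processed brick or into the wrong column within $B_\ell$. This is where the dominance constraint $\beta\trianglelefteq\lambda$ does essential work beyond the local order properties — it prevents $\beta$ from placing too much mass early — and where the fact that $\tau$ is the \emph{rectangular} shape $\left(m^{2k}\right)$ is used, since the column strictness of $T$ together with the two-rows-per-brick structure is exactly what rigidifies the stacking. I would phrase this carefully as a claim that after treating $B_0,\dots,B_{\ell-1}$ the multiset of values $\left\{j:j\in B_\ell\cup\dots\cup B_{k-1}\right\}$ fills rows $2\ell+1,\dots,2k$ of $T$ in a reverse-standard way, reducing the situation to the $k-\ell$ brick case and closing the induction.
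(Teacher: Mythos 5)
Your overall architecture is the same as the paper's: induct brick by brick starting from $B_0$, alternate between the two parities within a brick, exploit the order properties of $\mathcal{X}_{\beta,T}$ together with the fundamental equation, and after finishing a brick strip it off and reduce to a smaller instance. The genuine gap is at the crux, namely how a wrong value $\beta_i=\ell>0$ is actually eliminated. Your proposed mechanism is to bound $c\left(r_\beta\left(i\right),T\right)$ by comparison with neighbors ``whose $\beta^{+}$-values are already known from the induction,'' supplemented by the global content bound $c\geq -\left(2k-1\right)$. This does not suffice: if $\beta_i=\ell>0$, the fundamental equation places the cell of rank $r_\beta\left(i\right)$ in row $a=b-c\left(i,S_0\right)+\ell\left(m+2\right)\geq 4$, deep inside $T$ where \emph{no} entries are known at that stage (the known cells sit in the top two rows), so there is no known neighbor to compare with; and the crude range bound eliminates $\ell>0$ only when $\ell\left(m+2\right)>2k-1$, which fails already for $m=1$, $k\geq 2$. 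The paper's device at exactly this point is different: it passes to the cell directly above, $\mathcal{X}_{\beta,T}\left[a-1,b\right]=\left(d,j\right)$, whose value $j$ is \emph{unknown} but strictly smaller than $\ell$ (column order property plus the fact that $i$ is the last untreated index carrying the value $\ell$), applies the fundamental equation to the index $e$ with $r_\beta\left(e\right)=d$, and obtains $c\left(e,S_0\right)\leq n-m-1$, contradicting $\min\left\{c\left(s,S_0\right):s<i\right\}=n-1$. Transferring the contradiction to an unknown neighboring cell via the fundamental equation and comparing with the extremal content of $S_0$ over the untreated indices is the missing idea; without it the ``trap $\beta_i$ between two bounds'' plan stalls precisely where the work is.

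A symptom of the same gap is your base case: you assert that $i=2mk$ forces $\beta_{2mk}=0$ and $r_\beta\left(2mk\right)=1$ ``directly.'' In fact $r_\beta\left(2mk\right)=2mk$ (it is the last occurrence of the smallest value), and $\beta_{2mk}=0$ is \emph{not} forced by content bounds alone once $2k-1\geq m+2$, since $c\left(r_\beta\left(2mk\right),T\right)=-\ell\left(m+2\right)$ is then an admissible content of $\tau$ for some $\ell\geq1$; the corner needs the same neighbor-above argument as the general cell. Two smaller points: the dominance hypothesis is used in the paper mainly in the mirror-image version of the argument (to get $\beta_i\leq k-1$ when the bottom brick is stripped) and to keep the reduced data in the same form, rather than at the brick junction you single out; and the junction itself is settled not by a mass-distribution argument but by the observation that, once the parity-alternating induction has fixed the ranks, the only remaining cell of $T$ with the content demanded by the fundamental equation is the one $T_0$ uses.
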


\begin{proof}
This is an inductive argument alternating between even and odd indices to
prove the desired equalities for brick $B_{0}$. Then the argument is applied
to the tableaux with one less brick. Suppose we have shown $\beta_{2m-j}=0$
for $0\leq j\leq2n-1\leq2m-3$ (thus $n\leq m-1$) and $T\left[  1,i+1\right]
=2mk-2i$ for $0\leq i\leq n$ and $T\left[  2,i+1\right]  =2mk-2i-1$ for $0\leq
i\leq n-1$. The start of the induction is $n=0$ so the previous conditions are
vacuous. Suppose $\beta_{2mk-2n}=\ell$, $r_{\beta}\left(  2mk-2n\right)
=\rho$ and $\mathcal{X}_{\beta,T}\left[  a,b\right]  =\left(  \rho
,\ell\right)  $. Then%
\begin{align*}
\ell\left(  m+2\right)  +c\left(  \rho,T\right)   &  =c\left(  2mk-2n,S_{0}%
\right)  =n,\\
b-a  &  =c\left(  \rho,T\right)  =n-\ell\left(  m+2\right)  ,\\
a  &  =b-n+\ell\left(  m+2\right) \\
&  \geq\left(  \ell-1\right)  \left(  m+2\right)  +m+3-n.
\end{align*}
Thus if $\ell>0$ and $n\leq m-1$ then $a\geq4$. Let $\mathcal{X}_{\beta
,T}\left[  a-1,b\right]  =\left(  d,j\right)  $ with $d>\rho,$ $j<\ell$ and
$r_{\beta}\left(  e\right)  =d$ (thus $\beta_{e}=j$); furthermore $e<2mk-2n$
since $a-1\geq3$. Then%
\begin{align*}
j\left(  m+2\right)  +c\left(  d,T\right)   &  =j\left(  m+2\right)
+b-a+1=c\left(  e,S_{0}\right) \\
c\left(  e,S_{0}\right)   &  =j\left(  m+2\right)  +n+1-\ell\left(  m+2\right)
\\
&  =n+1-\left(  \ell-j\right)  \left(  m+2\right)  \leq n-m-1
\end{align*}
But $\min\left(  c\left(  i,S_{0}\right)  :i<2mk-2n\right)  =n-1$ and there is
a contradiction. Thus $\beta_{2mk-2n}=0$, $\rho=2mk-2n$ and $T\left[
1,n+1\right]  =2mk-2n$ (because the entry $2mk-2n$ has to be adjacent to
$2mk-2n+2$). The start $n=0$ forces $T\left[  1,1\right]  =2mk$. The last step
is with $n=m-1$ and results in $T\left[  1,m\right]  =2mk-2m+2$.

Suppose we have shown $\beta_{2mk-j}=0$ for $0\leq j\leq2n\leq2m-2$ and
$T\left[  1,i+1\right]  =2mk-2i$ for $0\leq i\leq n$ and $T\left[
2,i+1\right]  =2mk-2i-1$ for $0\leq i<n$ (the first step is with $n=0$).
Suppose $\beta_{2mk-2n-1}=\ell$, $r_{\beta}\left(  2mk-2n-1\right)  =\rho$ and
$\mathcal{X}_{\beta,T}\left[  a,b\right]  =\left(  \rho,\ell\right)  $ then%
\begin{align*}
\ell\left(  m+2\right)  +c\left(  \rho,T\right)   &  =c\left(  2mk-2n-1,S_{0}%
\right)  =n-1,\\
b-a  &  =c\left(  \rho,T\right)  =n-1-\ell\left(  m+2\right)  ,\\
a  &  =b-n+1+\ell\left(  m+2\right) \\
&  \geq\left(  \ell-1\right)  \left(  m+2\right)  +m-n+4.
\end{align*}
If $\ell>0$ and $n\leq m-1$ then $a\geq4$ and $\mathcal{X}_{\beta,T}\left[
a-1,b\right]  =\left(  d,j\right)  $ with $d>\rho$, $j<\ell$ (because
$\beta_{2mk-2n-1}$ is the last appearance of $\ell$ in $\beta$) and $r_{\beta
}\left(  e\right)  =d$ (thus $\beta_{e}=j$). From $a-1\geq3$ it follows that
$e<2mk-2n-1$. Then
\begin{align*}
\left(  m+2\right)  j+c\left(  d,T\right)   &  =\left(  m+2\right)
j+b-a+1=c\left(  e,S_{0}\right) \\
c\left(  e,S_{0}\right)   &  =\left(  m+2\right)  j+n-\ell\left(  m+2\right)
\\
&  =n-\left(  \ell-j\right)  \left(  m+2\right)  \leq n-m-2.
\end{align*}
But $\min\left(  c\left(  i,S_{0}\right)  :i<2mk-2n-1\right)  =n>n-m-2$ and
this is a contradiction Thus $\beta_{2mk-2n-1}=0$ and $T\left[  2,n+1\right]
=2mk-2n-1$ (because $\left[  2,n+1\right]  $ is the only remaining cell in the
first two rows with content $n-1$). The last step of the induction is for
$n=m-1$.

Replace the original problem by a smaller one: let $\lambda^{\prime}=\left[
\lambda_{i}-1\right]  _{i=1}^{2m\left(  k-1\right)  },\beta^{\prime}=\left[
\beta_{i}-1\right]  _{i=1}^{2m\left(  k-1\right)  }$ the tableau
$S_{0}^{\prime}$ of shape $2\times m\left(  k-1\right)  $ with entries
$S_{0}^{^{\prime}}\left[  i,j\right]  =S_{0}\left[  i,j+m\right]  $ for
$i=1,2$ and $1\leq j\leq m\left(  k-1\right)  $ and the tableaux
$T_{0}^{\prime}$ and $T^{\prime}$ of shape $2\left(  k-1\right)  \times m$
with entries $T_{0}^{\prime}\left[  i,j\right]  =T_{0}\left[  i+2,j\right]
,T^{\prime}\left[  i,j\right]  =T\left[  i+2,j\right]  $ for $1\leq
i\leq2\left(  k-1\right)  $ and $1\leq j\leq m.$ The consequences of these
definitions are with $1\leq i\leq2m\left(  k-1\right)  $
\begin{align*}
c\left(  i,T^{\prime}\right)   &  =c\left(  i,T\right)  +2,c\left(
i,T_{0}^{\prime}\right)  =c\left(  i,T_{0}\right)  +2,\\
c\left(  i,S_{o}^{\prime}\right)   &  =c\left(  i,S_{0}\right)  -m,\\
r_{\beta^{\prime}}\left(  i\right)   &  =r_{\beta}\left(  i\right)
,\lambda^{\prime}\trianglerighteq\beta^{\prime}.
\end{align*}
Then%
\begin{align*}
\left(  m+2\right)  \beta_{i}^{\prime}+c\left(  r_{\beta^{\prime}}\left(
i\right)  ,T^{\prime}\right)   &  =\left(  m+2\right)  \left(  \beta
_{i}-1\right)  +c\left(  r_{\beta}\left(  i\right)  ,T\right)  +2\\
&  =\left(  m+2\right)  \beta_{i}+c\left(  r_{\beta}\left(  i\right)
,T\right)  -m\\
&  =c\left(  i,S_{0}\right)  -m=c\left(  i,S_{0}^{\prime}\right)  .
\end{align*}
and the same argument as before shows that $\left(  \beta,T\right)  $ agrees
with $\left(  \lambda,T_{0}\right)  $ in the first four rows (the first two
bricks). Repeat this process $\left(  k-1\right)  $ times arriving at
$\beta_{i}^{^{\prime\prime}}=0$ for $1\leq i\leq2m$ and the entries of the
remaining $T^{\prime\prime}$ are $2m,2m-1,\ldots,1$ entered column by column%
\[%
\begin{bmatrix}
2m & \cdots & 4 & 2\\
2m-1 & \cdots & 3 & 1
\end{bmatrix}
.
\]
Thus $\left(  \beta,T\right)  =\left(  \lambda,T_{0}\right)  $ and the
spectral vector of $\left(  \lambda,T\right)  $ is unique.
\end{proof}

We set up the same argument for removing the last brick $B_{k-1}$. Intuitively
this is already done: rotate the tableaux through $180^{\cdot}$ and replace
the entry $r$ by $2mk+1-r$. This idea guides the proof. The property
$\beta\trianglelefteq\lambda$ implies $\beta_{i}\leq k-1$ for all $i$. Here
the inductive argument alternates between odd and even indices.

\begin{proof}
\label{uniqlb2}(Theorem \ref{uniqlb!} alternate): Suppose we have shown
$\beta_{j}=k-1$ for $1\leq j\leq2n$, $T\left[  2k,m-j\right]  =2j+1$ for
$0\leq j\leq n-1$ and $T\left[  2k-1,m-j\right]  =2j+2$ for $1\leq j\leq n-1$
(the first step is at $n=0$ with vacuous conditions on $T$, the last at
$n=m-1$). Suppose $\beta_{2n+1}=\ell$, $r_{\beta}\left(  2n+1\right)  =\rho$
and $\mathcal{X}_{\beta,T}\left[  a,b\right]  =\left(  \rho,\ell\right)  $
then (using $b\leq m$)%
\begin{align*}
\ell\left(  m+2\right)  +c\left(  \rho,T\right)   &  =c\left(  2n+1,S_{0}%
\right)  =mk-n-2,\\
b-a  &  =mk-n-2-\ell\left(  m+2\right)  ,\\
a  &  =b-mk+n+2+\ell\left(  m+2\right) \\
&  \leq n+2k-\left(  k-1-\ell\right)  \left(  m+2\right)  .
\end{align*}
If $\ell<k-1$ and $n\leq m-1$ then $a\leq2k-3$. Let $\mathcal{X}_{\beta
,T}\left[  a+1,b\right]  =\left(  d,j\right)  $ with $d<\rho$, $j>\ell$, and
$r_{\beta}\left(  e\right)  =d$ (so that $\beta_{e}=j$ ; furthermore $e>2n+1$
since $a+1\leq2k-2$, then%
\begin{align*}
\left(  m+2\right)  j+c\left(  d,T\right)   &  =\left(  m+2\right)
j+b-a-1=c\left(  e,S_{0}\right) \\
c\left(  e,S_{0}\right)   &  =mk-n-3-\ell\left(  m+2\right)  +j\left(
m+2\right) \\
&  \geq mk+m-n-1
\end{align*}
But $\max\left\{  c\left(  i,S_{0}\right)  :i>2n+1\right\}  =mk-n-1<mk+m-n-1$,
a contradiction, thus $\beta_{2n+1}=k-1$ and $T\left[  2k,m-n\right]  =2n+1$
(because this is the only possible cell in the last two rows with entry
$2n+1$). The start is $T\left[  2k,m\right]  =1$ (forced by definition of
RSYT) and $\beta_{1}=k-1$. The last step results in $T\left[  2k,1\right]
=2m-1$.

Suppose we have shown $\beta_{j}=k-1$ for $1\leq j\leq2n-1$, $T\left[
2k,m-j\right]  =2j+1$ for $0\leq j\leq n-1$ and $T\left[  2k-1,m-j\right]
=2j+2$ for $0\leq j\leq n-2$ (the first step is at $n=1$, the last at $n=m$).
Suppose $\beta_{2n}=\ell$ , $r_{\beta}\left(  2n\right)  =\rho$ and
$\mathcal{X}_{\beta,T}\left[  a,b\right]  =\left(  \rho,\ell\right)  $ then%
\begin{align*}
\ell\left(  m+2\right)  +c\left(  \rho,T\right)   &  =c\left(  2n,S_{0}%
\right)  =mk-n,\\
b-a  &  =mk-n-\ell\left(  m+2\right)  ,\\
a  &  =b-mk+n+\ell\left(  m+2\right) \\
&  \leq n+2k+2-\left(  k-1-\ell\right)  \left(  m+2\right)  .
\end{align*}
Thus if $\ell<k-1$ and $n\leq m$ then $a\leq2k-3$ . Let $\mathcal{X}_{\beta
,T}\left[  a+1,b\right]  =\left(  d,j\right)  $ with $d<\rho$, $j>\ell$, and
$r_{\beta}\left(  e\right)  =d$ (so that $\beta_{e}=j$ ) then%
\begin{align*}
\left(  m+2\right)  j+c\left(  d,T\right)   &  =b-a-1+\left(  m+2\right)
j=c\left(  e,S_{0}\right) \\
c\left(  e,S_{0}\right)   &  =mk-n-1-\ell\left(  m+2\right)  +j\left(
m+2\right) \\
&  \geq mk+m-n+1
\end{align*}
But $\max\left\{  c\left(  i,S_{0}\right)  :i>2n\right\}  =mk-n-1<mk+m-n+1$
and there is a contradiction. Thus $\beta_{2n}=k-1,r_{b}\left(  2n\right)
=2n$ and $T\left[  2k-1,m+1-n\right]  =2n$, because this is the only cell in
the last two rows with content $m-n-2k+2$.

The inductive process concludes by showing $\beta_{i}=k-1$ for $1\leq i\leq2m$
and $T\left[  i,j\right]  =T_{0}\left[  i,j\right]  $ for $i=2k-1,2k$ and
$1\leq j\leq m$. As before the original problem can be reduced to a smaller
one by removing the last brick. This is implemented by defining $\lambda
^{\prime},\beta^{\prime},S_{0}^{\prime},T_{0}^{\prime},T^{\prime}$ as follows:%
\begin{align*}
\lambda_{i}^{\prime}  &  =\lambda_{i+2m},\beta_{i}^{\prime}=\beta
_{i+2m},r_{\beta^{\prime}}\left(  i\right)  =r_{\beta}\left(  i+2m\right)
-2m,1\leq i\leq2m\left(  k-1\right)  ,\\
S_{0}^{\prime}\left[  i,j\right]   &  =S_{0}\left[  i,j\right]
-2m,i=1,2,1\leq j\leq m\left(  k-1\right)  ,\\
T_{0}^{\prime}\left[  i,j\right]   &  =T_{0}\left[  i,j\right]  -2m,T^{\prime
}\left[  i,j\right]  =T\left[  i,j\right]  -2m,1\leq i\leq2\left(  k-1\right)
,1\leq j\leq m.
\end{align*}
Clearly $\lambda^{\prime}\trianglerighteq\beta^{\prime}$ and the hypothesis
$\left(  m+2\right)  \beta_{j}^{^{\prime}}+c\left(  r_{\beta^{\prime}}\left(
j\right)  ,T^{\prime}\right)  =c\left(  j,S_{0}^{\prime}\right)  $ holds for
$1\leq j\leq2m\left(  k-1\right)  $. So the bricks can be removed in the order
$k-1,k-2,\ldots$. At the end there is only one brick $B_{0},$all $\beta
_{i}=0,r_{\beta}\left(  i\right)  =i$ and $c\left(  i,T\right)  =c\left(
i,S_{0}\right)  $ (for $2mk-2m<i\leq2mk$ and the corresponding parts (brick
$B_{0}$) of $T$ and $S_{0}$ are identical, and thus to $T_{0}$.
\end{proof}

The second part is to prove uniqueness for the spectral vectors derived from
the content vectors of the tableaux $S_{\left(  j,n\right)  }$ (see Definition
\ref{defSjn}) where $n=ms$, at the edge of brick $B_{s-1}$ adjacent to the
edge of $B_{s}$ .To prove this we use the previous arguments to remove the
bricks above and below bricks $s-1$ and $s$ leaving us with a straightforward
argument where only two values of $\lambda$ play a part. To obtain the
hypothetically unique spectral vectors we apply reflections to $J_{\lambda
,T_{0}}$. For brevity let $i_{0}=2m\left(  k-s\right)  $. First compute
$s_{i_{0}}J_{\lambda,T_{0}}$, a nonzero multiple of $J_{s_{0}\lambda,T_{0}}$;
this is a permissible step and hence this polynomial is defined for
$\kappa=\frac{1}{m+2}$. Then form $s_{i_{0}+1}s_{i_{0}}J_{\lambda,T_{0}}$
which produces the polynomial labeled by $\left(  \alpha^{\left(  1\right)
},T_{0}\right)  $ whose spectral vector equals the content vector of
$S_{\left(  1,ms\right)  }$. Also form $s_{i_{0}-1}s_{i_{0}}J_{\lambda,T_{0}}%
$, with label $\left(  \alpha^{\left(  2\right)  },T_{0}\right)  $ associated
with $S_{\left(  2,ms\right)  }$. Here are tables of values of $\lambda
,s_{i_{0}}\lambda$, $\alpha^{\left(  1\right)  }=s_{i_{0}+1}s_{i_{0}}\lambda$,
$\alpha^{\left(  2\right)  }=s_{i_{0}-1}s_{i_{0}}\lambda$ in the zone of
relevance $\left(  i_{0}-1\leq i\leq i_{0}+2\right)  $;
\[%
\begin{vmatrix}
i= & i_{0}-1 & i_{0} & i_{0}+1 & i_{0}+2\\
\lambda & s & s & s-1 & s-1\\
s_{i_{0}}\lambda & s & s-1 & s & s-1\\
\alpha^{\left(  1\right)  } & s & s-1 & s-1 & s\\
\alpha^{\left(  2\right)  } & s-1 & s & s & s-1
\end{vmatrix}
\]
and the corresponding spectral vectors $\beta_{i}\left(  m+2\right)  +c\left(
r_{\beta}\left(  i\right)  ,T_{0}\right)  $, denoted by $v\left(
\beta,i\right)  $ for convenience,%
\[%
\begin{vmatrix}
i= & i_{0}-1 & i_{0} & i_{0}+1 & i_{0}+2\\
v\left(  \lambda,\cdot\right)  & sm-1 & sm & sm-2 & sm-1\\
v\left(  s_{i_{0}}\lambda,\cdot\right)  & sm-1 & sm-2 & sm & sm-1\\
v\left(  \alpha^{\left(  1\right)  },\cdot\right)  & sm-1 & sm-2 & sm-1 & sm\\
v\left(  \alpha^{\left(  2\right)  },\cdot\right)  & sm-2 & sm-1 & sm & sm-1
\end{vmatrix}
.
\]
The respective cells in $T_{0}$ are $T_{0}\left[  2s+2,1\right]
=i_{0}-1,T_{0}\left[  2s+1,1\right]  =i_{0},T_{0}\left[  2s,m\right]
=i_{0}+1,T_{0}\left[  2s-1,m\right]  =i_{0}+2$ with respective contents
$-1-2s,-2s,m-2s,m+1-2s$. Except for these four locations $v\left(
\lambda,i\right)  =v\left(  \alpha^{\left(  1\right)  },i\right)  =v\left(
a^{\left(  2\right)  },i\right)  =c\left(  i,S_{0}\right)  $ so in the bricks
$B_{j}$ for $0\leq j<s-1$ and $s<j\leq k-1$ the previous proofs can be applied
; the various $\max\left\{  v\left(  \lambda,i\right)  :i>b\right\}  $ and
$\min\left\{  v\left(  \lambda,i\right)  :i<b\right\}  $ values apply verbatim.

\begin{theorem}
Suppose $u=1$ or $2$ and $\left(  \beta,T\right)  \in\mathbb{N}_{0}%
^{2mk}\times\mathcal{Y}\left(  \tau\right)  $ such that $\beta\trianglelefteq
\alpha^{\left(  u\right)  }$ and $\left(  m+2\right)  \beta_{j}+c\left(
r_{\beta}\left(  i\right)  ,T\right)  =v\left(  \alpha^{\left(  u\right)
},i\right)  $ for $1\leq i\leq2mk$ then $\left(  \beta,T\right)  =\left(
\alpha^{\left(  u\right)  },T_{0}\right)  $.
\end{theorem}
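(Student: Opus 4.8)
The plan is to run the brick-removal inductions of Theorem~\ref{uniqlb!} from \emph{both} ends of $\lambda$ and to stop as soon as only the two bricks $B_{s-1}$ and $B_{s}$ survive; at that stage only the two composition values $s-1$ and $s$ occur and a short direct argument of the same flavour closes the proof.

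First I would record the observations that make the earlier arguments transplant. By construction $v(\alpha^{(u)},i)=c(i,S_{0})$ for every $i$ outside the zone of relevance $Z:=\{i_{0}-1,i_{0},i_{0}+1,i_{0}+2\}$, and $\{v(\alpha^{(u)},i):i\in Z\}$ is a permutation of $\{c(i,S_{0}):i\in Z\}=\{sm,sm-1,sm-1,sm-2\}$. Hence for every brick boundary $b$ that is not strictly inside $B_{s-1}\cup B_{s}$, the relevant range $\{i<b\}$ or $\{i>b\}$ either contains all of $Z$ or is disjoint from it, so $\min\{v(\alpha^{(u)},i):i<b\}$ and $\max\{v(\alpha^{(u)},i):i>b\}$ coincide with the values used in Theorem~\ref{uniqlb!}.

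Next, the removal. Applying the first proof of Theorem~\ref{uniqlb!} $s-1$ times pins down $\beta_{i}=\lambda_{i}=\alpha^{(u)}_{i}$ together with the corresponding cells of $T$ (equal to those of $T_{0}$) for all composition indices lying in the bricks $B_{0},\dots,B_{s-2}$, i.e.\ the indices $i>i_{0}+2m$; applying the alternate proof $k-1-s$ times does the same for $B_{k-1},\dots,B_{s+1}$, i.e.\ the indices $i\le i_{0}-2m$ (one of the two phases is empty when $s=1$ or $s=k-1$). Each such step invokes the fundamental equation only at indices off $Z$ and only the extremal bounds just recorded, so it is a verbatim repetition, and each step preserves the dominance hypothesis. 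What remains is a pair supported on the $4m$ middle composition indices $i_{0}-2m+1\le i\le i_{0}+2m$ and on the four rows $2s-1,2s,2s+1,2s+2$ of $T$; after relabelling these indices, entries, rows and columns this is precisely the $k=2,\ s=1$ case of the theorem, the target being the rearrangement of $(1^{2m},0^{2m})$ attached to $S_{(u,m)}$. At this point the dominance $\beta\trianglelefteq\alpha^{(u)}$ together with the off-middle values already fixed forces $\beta$ to have the very same multiset of values as $\lambda$: if $a\ge 1$ middle entries exceeded $s$, comparison of the partial sums of $\beta^{+}$ and of $\lambda$ at rank $2m(k-1-s)+a$ would give a strict violation of $\beta^{+}\preceq\lambda$, and the lower bound is symmetric. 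Thus every middle entry of $\beta$ lies in $\{s-1,s\}$ with exactly $2m$ of each value, and $r_{\beta}$ carries the value-$s$ middle indices bijectively onto the entries of the lower middle brick and the value-$(s-1)$ ones onto the upper middle brick, exactly as in Definition~\ref{S2BT}.

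Finally I would settle this $k=2,\ s=1$ residue by the alternating even/odd induction of Theorem~\ref{uniqlb!}. That induction proceeds through the two bricks exactly as before — each cell being identified from the fundamental equation and the order properties of $\mathcal{X}_{\beta,T}$ (first coordinates strictly decreasing, second coordinates weakly increasing along every row and every column), a contradiction arising whenever $\beta_{i}$ takes the wrong value — until it reaches the two columns carrying the defect $Z$, the common boundary of $B_{s-1}$ and $B_{s}$. There the right-hand side is the permuted vector $v(\alpha^{(u)},\cdot)$ rather than $c(\cdot,S_{0})$, and the extremal estimates $\min\{v(\alpha^{(u)},i):i<b\}$, $\max\{v(\alpha^{(u)},i):i>b\}$ for $b$ inside or adjacent to $Z$ must be recomputed; one then checks directly that the inequalities still force the pattern $(1,0,0,1)$ for $u=1$, resp.\ $(0,1,1,0)$ for $u=2$, on the four cells of $Z$ together with the matching entries of $T$, whence $(\beta,T)=(\alpha^{(u)},T_{0})$. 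I expect this handling of the four defect cells to be the main obstacle: in Theorem~\ref{uniqlb!} the target is the already sorted $\lambda$ and the estimates are uniform throughout, whereas here the alternating induction must be re-run with care precisely at the junction of $B_{s-1}$ and $B_{s}$; everything outside $Z$ is a faithful transcription of arguments already in hand.
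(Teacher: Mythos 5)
Your proposal is correct and follows essentially the same route as the paper: strip off the bricks above and below the junction with the two inductions of Theorem \ref{uniqlb!} (noting the spectral values agree with $c\left(  \cdot,S_{0}\right)  $ off the four defect cells, so the extremal estimates apply verbatim), reduce to the $k=2$, $s=1$ two-brick case where $\beta$ must be a permutation of $\left(  1^{2m},0^{2m}\right)  $, and finish with a direct check at the four junction cells forcing the patterns $\left(  1,0,0,1\right)  $ and $\left(  0,1,1,0\right)  $. The paper carries out that last check explicitly via the table of the four remaining equations, exactly as you anticipate.
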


\begin{proof}
By the previous arguments we show $T\left[  i,j\right]  =T_{0}\left[
i,j\right]  $ for $1\leq j\leq m$, $1\leq i\leq2s-2$ (using the proof for
Theorem \ref{uniqlb!}) and $2s+3\leq i\leq2k$ (using the alternate proof
\ref{uniqlb2}).. This leaves just two bricks and we can assume $s=1,k=2$.
Reducing $v\left(  \alpha^{\left(  u\right)  },\cdot\right)  $ to $s=1,k=2$
results in (with $1\leq i\leq2m$ and $i\neq m,m+1$)%
\begin{align*}
v\left(  \alpha^{\left(  u\right)  },2i\right)   &  =2m-i,\\
v\left(  \alpha^{\left(  u\right)  },2i-1\right)   &  =2m-i-1,\\
\left[  v\left(  \alpha^{\left(  1\right)  },i\right)  \right]  _{i=2m-1}%
^{2m+2}  &  =\left[  m-2,m-1,m,m-1\right]  ,\\
\left[  v\left(  \alpha^{\left(  2\right)  },i\right)  \right]  _{i=2m-1}%
^{2m+2}  &  =\left[  m-1,m-2,m-1,m\right]  .
\end{align*}
The property $\beta\trianglelefteq\alpha^{\left(  u\right)  }$ implies that
$\beta$ is a permutation of $\left(  1^{2m},0^{2m}\right)  $. The entries
$2m+1,2m+2,\ldots,4m$ in $T$ are all in $B_{0}$ and the entries $1,2,\ldots
,2m$ are in $B_{1}$. This follows from $\mathcal{X}_{\beta,T}\left[
a,b\right]  =\left(  r,0\right)  $ implies $1\leq a\leq2$, for if $a=3$ or $4$
then the ordering property of $\mathcal{X}_{\beta,T}$ implies $\mathcal{X}%
_{\beta,T}\left[  3,1\right]  =\left(  r^{\prime},0\right)  $ or
$\mathcal{X}_{\beta,T}\left[  4,1\right]  =\left(  r^{\prime},0\right)  $ but
$c\left(  r^{\prime},T\right)  =-2$ or $-3$ is impossible (as values of
$v\left(  \alpha^{\left(  u\right)  },i\right)  $). Thus the $2m$ pairs
$\left\{  \left(  r,0\right)  :2m+1\leq r\leq4m\right\}  $ fill $\left\{
\mathcal{X}_{\beta,T}\left[  a,b\right]  :a=1,2,1\leq b\leq m\right\}  $. The
next few steps are for $m\geq2$; if $m=1$ then there are just $4$ cells left
and the last part of the proof suffices. By using the previous arguments we
show $\beta_{i}=1$ for $1\leq i\leq2m-2$ and $\beta_{i}=0$ for $2m+3\leq
i\leq4m$, and also that for $1\leq j\leq m-1~$and $m+2\leq j\leq2m$%
\begin{align*}
T\left[  1,j\right]   &  =T_{0}\left[  1,j\right]  =4m+2-2j,\\
T\left[  2,j\right]   &  =T_{0}\left[  2,j\right]  =4m+1-2j,
\end{align*}
and for $2\leq j\leq m$
\begin{align*}
T\left[  3,j\right]   &  =T_{0}\left[  3,j\right]  =2m+2-2j\\
T\left[  4,j\right]   &  =T_{0}\left[  4,j\right]  =2m+1-2j.
\end{align*}
As example of the steps of the proof let $\beta_{1}=\ell$ then $\left(
m+2\right)  \ell+c\left(  r_{\beta}\left(  1\right)  ,T\right)  =2m-2$ and let
$\mathcal{X}_{\beta,T}\left[  a,b\right]  =\left(  r_{\beta}\left(  1\right)
,\ell\right)  $. Then
\begin{align*}
b-a  &  =2m-2-\left(  m+2\right)  \ell,\\
a  &  =b-2m+2+\left(  m+2\right)  \ell\\
&  \leq2-m+\left(  m+2\right)  \ell
\end{align*}
But if $\ell=0$ then $a\leq0$ (since $m\geq2$) which is impossible; thus
$\beta_{1}=1,a=4$ and $T\left[  4,m\right]  =1.$ Similarly consider
$\beta_{4m}=\ell$ and $\mathcal{X}_{\beta,T}\left[  a,b\right]  =\left(
r_{\beta}\left(  4m\right)  ,\ell\right)  $ then%
\begin{align*}
0  &  =\left(  m+2\right)  \ell+c\left(  r_{\beta}\left(  4m\right)
,T\right)  ,\\
a  &  =b+\left(  m+2\right)  \ell\geq1+\left(  m+2\right)  \ell,
\end{align*}
but if $\ell=1$ then $a\geq5$ which is impossible; thus $\beta_{4m}=0$ and
$T\left[  1,1\right]  =4m$.

All but four entries have been accounted for and thus $T\left[  2,m\right]
=2m+1$, the rank of the first zero in $\beta$, and $T\left[  3,1\right]
=2m,$the rank of the last $1$ in $\beta$. Thus $T=T_{0}$. The relevant part of
the content vector is
\[
\left[  c\left(  i,T\right)  \right]  _{i=2m-1}^{2m+2}=\left[
-3,-2,m-2,m-1\right]
\]
The remaining equations are%
\[%
\begin{array}
[c]{ccc}
& \alpha^{\left(  1\right)  } & \alpha^{\left(  2\right)  }\\
\left(  m+2\right)  \beta_{2m-1}+c\left(  r_{\beta}\left(  2m-1\right)
,T\right)  = & m-1 & m-2\\
\left(  m+2\right)  \beta_{2m}+c\left(  r_{\beta}\left(  2m\right)  ,T\right)
= & m-2 & m-1\\
\left(  m+2\right)  \beta_{2m+1}+c\left(  r_{\beta}\left(  2m+1\right)
,T\right)  = & m-1 & m\\
\left(  m+2\right)  \beta_{2m+2}+c\left(  r_{\beta}\left(  2m+2\right)
,T\right)  = & m & m-1
\end{array}
.
\]
Let $\beta_{j_{1}}=\beta_{j_{2}}=1$ with $2m-1\leq j_{1}<j_{2}\leq2m+2$. Then
$r_{\beta}\left(  j_{1}\right)  =2m-1,m+2+c\left(  r_{\beta}\left(
j_{1}\right)  \right)  =m-1$ and $r_{\beta}\left(  j_{2}\right)
=2m,m+2+c\left(  r_{\beta}\left(  j_{2}\right)  \right)  =m$. Let
$\beta_{j_{3}}=\beta_{j_{4}}=0$ with $2m-1\leq j_{3}<j_{4}\leq2m+2$. Then
$r_{\beta}\left(  j_{3}\right)  =2m+1,c\left(  r_{\beta}\left(  j_{3}\right)
\right)  =m-2$ and $r_{\beta}\left(  j_{4}\right)  =2m+2,c\left(  r_{\beta
}\left(  j_{4}\right)  \right)  =m-1$.

\textit{Case} $\alpha^{\left(  1\right)  }$: From the table we see that
$j_{3}=2m$ and $j_{2}=2m+2$. This implies $j_{4}=2m+1$ and.$j_{1}=2m-1$ Thus
$\beta=\alpha^{\left(  1\right)  }$, with central entries $\left(
1,0,0,1\right)  .$

\textit{Case} $\alpha^{\left(  2\right)  }:$ From the table we see that
$j_{3}=2m-1$ and $j_{2}=2m+1$. This implies $j_{1}=2m$ and $j_{4}=2m+2$. Thus
$\beta=\alpha^{\left(  2\right)  }$, with $\left(  0,1,1,0\right)  $ being the
central entries.

This concludes the proof.
\end{proof}

\section{Maps of standard modules\label{Maps}}

The algebra generated by $\mathcal{D}_{i}$ and multiplication by $x_{i}$ for
$1\leq i\leq2mk$ along with $w\in\mathcal{S}_{2mk}$ is the rational Cherednik
algebra (type $A_{2mk-1}$) and $\mathcal{P}_{\tau}$ is called the standard
module associated with $\tau$, denoted $\Delta_{\kappa}\left(  \tau\right)  $.
In this section we construct a homomorphism from the module $\mathcal{P}%
_{\sigma}$ to $\mathcal{P}_{\tau}$ when the parameter $\kappa=\frac{1}{m+2}$.
In the notation of Definition \ref{S2BT} for each $S\in\mathcal{Y}\left(
\sigma\right)  $ there is a pair $\left(  \beta\left\{  S\right\}  ,T\left\{
S\right\}  \right)  $ such that the spectral vector $\zeta_{\beta\left\{
S\right\}  ,T\left\{  S\right\}  }^{\prime}\left(  i\right)  =c\left(
i,S\right)  $ for $1\leq i\leq2mk$ at $\kappa=\frac{1}{m+2}$. However the
polynomials $J_{\beta\left\{  S\right\}  ,T\left\{  S\right\}  }$ need to be
rescaled so that they transform under $w$ with the same matrix as
$\mathcal{Y}\left(  \sigma\right)  $. Recall the formula for $\left\Vert
S\right\Vert ^{2}$ which is derived from the requirement that $\left\{
S:S\in\mathcal{Y}\left(  \sigma\right)  \right\}  $ is an orthogonal basis and
each $\sigma\left(  w\right)  $ is an isometry (and we use this requirement
for $\left\Vert J_{\beta\left\{  S\right\}  ,T\left\{  S\right\}  }\right\Vert
^{2}$ as well)%
\[
\left\Vert S\right\Vert ^{2}=\prod\limits_{\substack{1\leq i<j\leq
2km\\c\left(  i,S\right)  -c\left(  j,S\right)  \leq-2}}\left(  1-\frac
{1}{\left(  c\left(  i,S\right)  -c\left(  j,S\right)  \right)  ^{2}}\right)
.
\]
By the construction of $S_{0}$ (column by column) either $i$ is odd and $j>i$
implies $\operatorname{col}\left(  j,S_{0}\right)  \leq\operatorname{col}%
\left(  i,S_{0}\right)  $ and $c\left(  i,S_{0}\right)  -c\left(
j,S_{0}\right)  \geq-1$ or $i$ is even and $j>i$ implies $\operatorname{col}%
\left(  j,S_{0}\right)  <\operatorname{col}\left(  i,S_{0}\right)  $ and
$c\left(  i,S_{0}\right)  -c\left(  j,S_{0}\right)  \geq0$, thus $\left\Vert
S_{0}\right\Vert ^{2}=1$.

Suppose $\operatorname{row}\left(  i,S\right)  <\operatorname{row}\left(
i+1,S\right)  ,\operatorname{col}\left(  i,S\right)  >\operatorname{col}%
\left(  i+1,S\right)  $ so that%
\begin{align*}
c\left(  i,S\right)   &  =\operatorname{col}\left(  i,S\right)
-\operatorname{row}\left(  i,S\right)  \geq\left(  \operatorname{col}\left(
i+1,S\right)  +1\right)  -\left(  \operatorname{row}\left(  i+1,S\right)
-1\right) \\
&  =c\left(  i+1,S\right)  +2,
\end{align*}
and the transformation rule (\ref{ieqi1}) yields (with $b_{i}\left(  S\right)
=\left(  c\left(  i,S\right)  -c\left(  i+1,S\right)  \right)  ^{-1}$):%
\begin{align*}
\sigma\left(  s_{i}\right)  S  &  =S^{\left(  i\right)  }+b_{S}\left(
i\right)  S\\
\left\Vert S\right\Vert ^{2}  &  =\left\Vert \sigma\left(  s_{i}\right)
S\right\Vert ^{2}=\left\Vert S^{\left(  i\right)  }\right\Vert ^{2}%
+b_{S}\left(  i\right)  ^{2}\left\Vert S\right\Vert ^{2}\\
\left\Vert S^{\left(  i\right)  }\right\Vert ^{2}  &  =\left(  1-b_{S}\left(
i\right)  ^{2}\right)  \left\Vert S\right\Vert ^{2}.
\end{align*}
We need two rules for the NSJP $J_{\alpha,T}$:

\begin{enumerate}
\item $\alpha_{i}=\alpha_{i+1},j=r_{\alpha}\left(  i\right)  $ and $c\left(
j,T\right)  -c\left(  j+1,T\right)  \geq2$ (see (\ref{ieqi1}))%
\begin{align}
s_{i}J_{\alpha,T}  &  =b_{\alpha,T}\left(  i\right)  J_{\alpha,T}%
+J_{\alpha,T^{\left(  j\right)  }}\nonumber\\
\left\Vert J_{\alpha,T}\right\Vert ^{2}  &  =\left\Vert s_{i}J_{\alpha
,T}\right\Vert ^{2}=b_{\alpha,T}\left(  i\right)  ^{2}\left\Vert J_{\alpha
,T}\right\Vert ^{2}+\left\Vert J_{\alpha,T^{\left(  j\right)  }}\right\Vert
^{2}\nonumber\\
\left\Vert J_{\alpha,T^{\left(  j\right)  }}\right\Vert ^{2}  &  =\left(
1-b_{\alpha,T}\left(  i\right)  ^{2}\right)  \left\Vert J_{\alpha
,T}\right\Vert ^{2} \label{aieq1i1}%
\end{align}

\item $\alpha_{i}>\alpha_{i+1}$ (see (\ref{igti1}))%
\begin{align}
s_{i}J_{s_{i}\alpha,T} &  =-b_{\alpha,T}\left(  i\right)  J_{s_{i}\alpha
,T}+J_{\alpha,T}\nonumber\\
\left\Vert J_{s_{i}\alpha,T}\right\Vert ^{2} &  =\left\Vert s_{i}%
J_{s_{i}\alpha,T}\right\Vert ^{2}=b_{\alpha,T}\left(  i\right)  ^{2}\left\Vert
J_{s_{i}\alpha,T}\right\Vert ^{2}+\left\Vert J_{\alpha,T}\right\Vert
^{2}\nonumber\\
\left\Vert J_{s_{i}\alpha,T}\right\Vert ^{2} &  =\left(  1-b_{\alpha,T}\left(
i\right)  ^{2}\right)  ^{-1}\left\Vert J_{\alpha,T}\right\Vert ^{2}%
.\label{aigtai1}%
\end{align}

\end{enumerate}

Recall the abbreviation $\pi\left\{  S\right\}  =\beta\left\{  S\right\}
,T\left\{  S\right\}  $. The following discussion of $\left\Vert J_{\alpha
,T}\right\Vert ^{2}$ applies only to $\mathrm{span}\left\{  J_{\pi\left\{
S\right\}  }:S\in\mathcal{Y}\left(  \sigma\right)  \right\}  $ at
$\kappa=\frac{1}{m+2}$, which is an irreducible $\mathcal{S}_{2mk}$-module,
isomorphic to $V_{\sigma}$. We use the normalization ($\lambda=\beta\left\{
S_{0}\right\}  ,T_{0}=T\left\{  S_{0}\right\}  $)
\[
\left\Vert J_{\lambda,T_{0}}\right\Vert ^{2}=1
\]
and this determines the other norms. .

\begin{definition}
For $S\in\mathcal{Y}\left(  \sigma\right)  $ let $\gamma_{S}=\left\Vert
S\right\Vert /\left\Vert J_{\pi\left\{  S\right\}  }\right\Vert $. By
convention $\gamma_{S_{0}}=1$.
\end{definition}

\begin{proposition}
Suppose $S\in\mathcal{Y}\left(  \sigma\right)  $ then%
\[
\gamma_{S}=\prod\limits_{\substack{1\leq i<j\leq2mk\\\beta\left\{  S\right\}
_{i}<\beta\left\{  S\right\}  _{j}}}\left(  1-\frac{1}{\left(  c\left(
i,S\right)  -c\left(  j,S\right)  \right)  ^{2}}\right)  .
\]

\end{proposition}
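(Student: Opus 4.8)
The plan is to argue by downward induction on $\mathrm{inv}(S)$ (see (\ref{invS})), whose maximum $\binom{mk}{2}$ is attained only at $S_{0}$. The base case $S=S_{0}$ is immediate: the product is empty because $\beta\{S_{0}\}=\lambda$ is nonincreasing, and $\gamma_{S_{0}}=1$ by convention, consistent with $\|S_{0}\|^{2}=1=\|J_{\lambda,T_{0}}\|^{2}$. For $S\neq S_{0}$ I would fix a permissible step $S\rightarrow S'=S^{(i_{0})}$, so that $\operatorname{row}(i_{0},S)=2$, $\operatorname{row}(i_{0}+1,S)=1$, $\operatorname{col}(i_{0},S)<\operatorname{col}(i_{0}+1,S)$ and $\mathrm{inv}(S')=\mathrm{inv}(S)+1$; then the formula holds for $S'$ by the inductive hypothesis. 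Set $b=(c(i_{0},S)-c(i_{0}+1,S))^{-1}$, so $-\tfrac12\le b<0$, and note that $b=b_{\pi\{S\}}(i_{0})$ because $\zeta'_{\pi\{S\}}=[c(i,S)]_{i=1}^{2mk}$ at $\kappa=\tfrac1{m+2}$. The contents of $S'$ are those of $S$ with the two values $c(i_{0},\cdot)$, $c(i_{0}+1,\cdot)$ interchanged, so the transformation rule (\ref{ieqi1}) for $\sigma$, applied to $S'$, gives $\|S\|^{2}=(1-b^{2})\|S'\|^{2}$.

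The second step is to compute $\|J_{\pi\{S'\}}\|^{2}/\|J_{\pi\{S\}}\|^{2}$, distinguishing whether $i_{0},i_{0}+1$ lie in the same brick of $S$. If they lie in different bricks then $\beta\{S\}_{i_{0}}<\beta\{S\}_{i_{0}+1}$, and reading off Definition \ref{S2BT} (the brick of $i_{0}$ loses $i_{0}$ and gains $i_{0}+1$ in the same cell, and since no brick entry is strictly between these consecutive integers its local ranks do not change; similarly for the brick of $i_{0}+1$) one finds $\pi\{S'\}=(s_{i_{0}}\beta\{S\},T\{S\})$. Then (\ref{aigtai1}), with $\alpha=\beta\{S'\}$ (which has $\alpha_{i_{0}}>\alpha_{i_{0}+1}$), gives $\|J_{\pi\{S'\}}\|^{2}=(1-b^{2})\|J_{\pi\{S\}}\|^{2}$. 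If $i_{0},i_{0}+1$ lie in one brick then $\beta\{S\}_{i_{0}}=\beta\{S\}_{i_{0}+1}$; the two cells swap inside the brick, their consecutive local ranks swap, and Definition \ref{S2BT} gives $\pi\{S'\}=(\beta\{S\},T\{S\}^{(j)})$ with $j=r_{\beta\{S\}}(i_{0})$. Then (\ref{aieq1i1}), applied with $T$ replaced by $T\{S\}^{(j)}$ so that the relevant content difference is $\ge 2$, gives $\|J_{\pi\{S'\}}\|^{2}=(1-b^{2})^{-1}\|J_{\pi\{S\}}\|^{2}$. Combining with $\|S\|^{2}=(1-b^{2})\|S'\|^{2}$ yields $\gamma_{S}=\gamma_{S'}$ in the one-brick case and $\gamma_{S}=(1-b^{2})\gamma_{S'}$ in the two-brick case.

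The third step is to verify that the product $P(S)$ on the right-hand side satisfies the same recursion. Both the content pattern and the sequence $\beta\{\cdot\}$ change only in their $i_{0},i_{0}+1$ entries, and there only by interchange (in the one-brick case $\beta\{\cdot\}$ does not change at all). Hence for each fixed $j\notin\{i_{0},i_{0}+1\}$ the joint contribution to $P$ of the pairs inside $\{i_{0},i_{0}+1\}\times\{j\}$ is the same for $S$ and $S'$: the factor $1-(c(i_{0},S)-c(j,S))^{-2}$ is present exactly when $\beta\{S\}_{i_{0}}<\beta\{S\}_{j}$ (supplied by the pair $(i_{0},j)$ in one of $P(S),P(S')$ and by $(i_{0}+1,j)$ in the other), and symmetrically for $1-(c(i_{0}+1,S)-c(j,S))^{-2}$; pairs disjoint from $\{i_{0},i_{0}+1\}$ are untouched. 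The one remaining pair $(i_{0},i_{0}+1)$ belongs to the index set of $P(S)$ iff $\beta\{S\}_{i_{0}}<\beta\{S\}_{i_{0}+1}$ and to that of $P(S')$ iff $\beta\{S'\}_{i_{0}}<\beta\{S'\}_{i_{0}+1}$; these are true, respectively false in the two-brick case (so $P(S)=(1-b^{2})P(S')$) and both false in the one-brick case (so $P(S)=P(S')$). Thus $\gamma_{S}/\gamma_{S'}=P(S)/P(S')$ in every case, and the inductive hypothesis $\gamma_{S'}=P(S')$ finishes the proof.

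I expect the main obstacle to be the combinatorial bookkeeping of the third step together with the explicit identification of $\pi\{S'\}$: one has to check carefully, from Definition \ref{S2BT}, that a permissible transposition of two consecutive entries of $S$ straddling a brick boundary alters $(\beta\{S\},T\{S\})$ by exactly a coordinate transposition of $\beta$ with $T$ unchanged, whereas one lying inside a single brick alters it by exactly a transposition of two consecutive values of $T$ with $\beta$ unchanged; only with these identifications in hand do the formulas of Subsection \ref{Trans} and the norm recursions (\ref{aieq1i1}), (\ref{aigtai1}) apply verbatim to give the claimed $J$-norm ratios.
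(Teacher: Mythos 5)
Your proposal is correct and is essentially the paper's own argument: downward induction on $\mathrm{inv}$ from $S_{0}$ along permissible steps, splitting into the same-brick case (where $\pi\{S'\}=(\beta\{S\},T\{S\}^{(j)})$ and (\ref{aieq1i1}) applies) and the brick-boundary case (where $\pi\{S'\}=(s_{i_{0}}\beta\{S\},T\{S\})$ and (\ref{aigtai1}) applies), then matching the resulting recursions for $\gamma_{S}$ against those for the product. The only difference is cosmetic: you label the lower-$\mathrm{inv}$ tableau $S$ and the higher one $S'$, the reverse of the paper's convention, and your bookkeeping of the factors and of the pair $(i_{0},i_{0}+1)$ agrees with the paper's.
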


\begin{proof}
We argue by induction on $\mathrm{inv}\left(  S\right)  $ (see (\ref{invS})).
Suppose the formula holds for each $S\in\mathcal{Y}\left(  \sigma\right)  $
with $\mathrm{inv}\left(  S\right)  \geq\mathrm{inv}\left(  S_{0}\right)  -u$
for some $u$; the start is $u=0$. Suppose $\mathrm{inv}\left(  S^{\prime
}\right)  =\mathrm{inv}\left(  S_{0}\right)  -u-1$ and $c\left(  i,S^{\prime
}\right)  -c\left(  i+1,S^{\prime}\right)  \leq-2$, that is,
$\operatorname{row}\left(  i,S^{\prime}\right)  =2,\operatorname{row}\left(
i+1,S^{\prime}\right)  =1$ and $\operatorname{col}\left(  i,S^{\prime}\right)
<\operatorname{col}\left(  i+1,S^{\prime}\right)  $. Then $S^{\prime
}=S^{\left(  i\right)  }$ and $\mathrm{inv}\left(  S\right)  =\mathrm{inv}%
\left(  S_{0}\right)  -u$. Also $\left\Vert S^{\left(  i\right)  }\right\Vert
^{2}=\left(  1-b_{i}\left(  S\right)  ^{2}\right)  \left\Vert S\right\Vert
^{2}$. For convenience let $\beta=\beta\left\{  S\right\}  ,T=T\left\{
S\right\}  $. (Recall that $\zeta_{\beta\left\{  S\right\}  ,T\left\{
S\right\}  }^{\prime}\left(  j\right)  =c\left(  j,S\right)  $ for all $j$).
There are two cases for the relative locations of $i$ and $i+1$ in $S$.

If $i,i+1\in B_{\ell}$ for some $\ell$ then by definition $\beta_{i}%
=\beta_{i+1}=\ell$, $r_{\beta}\left(  i+1\right)  =j+1=r_{\beta}\left(
i\right)  +1$ and $j,j+1$ are in the same brick $B_{\ell}$ in $T$. Then
$T\left\{  S^{\left(  i\right)  }\right\}  =T^{\left(  j\right)  }%
,\beta\left\{  S^{\left(  i\right)  }\right\}  =\beta$. By formula
(\ref{aieq1i1})%
\begin{align*}
\left\Vert J_{\pi\left\{  S^{\left(  i\right)  }\right\}  ,}\right\Vert ^{2}
&  =\left(  1-b_{i}\left(  S\right)  ^{2}\right)  \left\Vert J_{\pi\left\{
S\right\}  }\right\Vert ^{2}\\
\gamma_{S^{\prime}}  &  =\gamma_{S}.
\end{align*}
Because $\beta_{i}=\beta_{i+1}$ the product in $\gamma_{S}$ is invariant under
the replacement $S\rightarrow S^{\left(  i\right)  }.$

If $i$ and $i+1$ are in different bricks then $\beta_{i}>\beta_{i+1}$ because
$\operatorname{col}\left(  i,S\right)  >\operatorname{col}\left(
i+1,S\right)  $. Then $\beta\left\{  S^{\prime}\right\}  =s_{i}\beta,T\left\{
S^{\prime}\right\}  =T$ and by formula (\ref{aigtai1})%
\begin{align*}
\left\Vert J_{s_{i}\beta,T}\right\Vert ^{2} &  =\left(  1-b_{\beta,T}\left(
i\right)  ^{2}\right)  ^{-1}\left\Vert J_{\beta,T}\right\Vert ^{2}\\
\frac{\left\Vert S^{\left(  i\right)  }\right\Vert ^{2}}{\left\Vert
J_{s_{i}\beta,T}\right\Vert ^{2}} &  =\left(  1-b_{\beta,T}\left(  i\right)
^{2}\right)  ^{2}\frac{\left\Vert S\right\Vert ^{2}}{\left\Vert J_{\beta
,T}\right\Vert ^{2}}\\
\gamma_{S^{\prime}} &  =\left(  1-b_{\beta,T}\left(  i\right)  ^{2}\right)
\gamma_{S}.
\end{align*}
The product in $\gamma_{S}$ is over pairs $\left(  a,b\right)  $ with $a<b$
and $\beta_{a}<\beta_{b}$. Changing $S$ to $S^{\left(  i\right)  }$ leaves the
pairs with $\left\{  a,b\right\}  \cap\left\{  i,i+1\right\}  =\varnothing$
alone and interchanges the pairs $\left(  a,i\right)  ,\left(  a,i+1\right)  $
and $\left(  i,b\right)  ,\left(  i+1,b\right)  $ respectively. The pair
$\left(  i,i+1\right)  $ is added to the product since $\beta\left\{
S^{\left(  i\right)  }\right\}  _{i}<\beta\left\{  S^{\left(  i\right)
}\right\}  _{i+1}$ and thus%
\[
\gamma_{S^{\prime}}=\prod\limits_{\substack{1\leq a<b\leq2mk\\\beta\left\{
S^{\prime}\right\}  _{a}<\beta\left\{  S^{\prime}\right\}  _{b}}}\left(
1-\frac{1}{\left(  c\left(  a,S^{\prime}\right)  -c\left(  b,S^{\prime
}\right)  \right)  ^{2}}\right)  .
\]
This completes the induction.
\end{proof}

For $w\in\emph{S}_{2mk}$ let $A\left(  w\right)  $ denote the matrix of the
action of $\sigma\left(  w\right)  $ on the basis $\left\{  S:S\in
\mathcal{Y}\left(  \sigma\right)  \right\}  $, so that $\sigma\left(
w\right)  S=\sum_{S^{\prime}}A\left(  w\right)  _{S^{\prime},S}S^{\prime}$.
These matrices are generated by the $A\left(  s_{i}\right)  $ which are
specified in the transformation formulas in Subsection \ref{ieqi1}. The
polynomial $\gamma_{S}J_{\pi\left\{  S\right\}  }$ is a simultaneous
eigenfunction of $\left\{  \omega_{i}\right\}  $ with the same respective
eigenvalues as $S$ and it has the same length, thus it satisfies%
\[
w\gamma_{S}J_{\pi\left\{  S\right\}  }=\sum_{S^{\prime}}A\left(  w\right)
_{S^{\prime},S}\gamma_{S^{\prime}}J_{\pi\left\{  S^{\prime}\right\}  }%
,w\in\mathcal{S}_{2mk},S\in\mathcal{Y}\left(  \sigma\right)  .
\]
Note $w\gamma_{S}J_{\pi\left\{  S\right\}  }\left(  x\right)  =\tau\left(
w\right)  \gamma_{S}J_{\pi\left\{  S\right\}  }\left(  xw\right)  $.

\begin{definition}
The linear map $\mu:\mathcal{P}_{\sigma}\rightarrow\mathcal{P}_{\tau}$ is
given by%
\[
\mu\left(  \sum_{S}f_{S}\left(  x\right)  \otimes S\right)  =\sum_{S}%
f_{S}\left(  x\right)  \gamma_{S}J_{\pi\left\{  S\right\}  }\left(  x\right)
,
\]
where each $f_{S}\in\mathcal{P}$.
\end{definition}

\begin{proposition}
The map $\mu$ commutes with multiplication by $x_{i}$ and with the action of
$\emph{S}_{2mk}$ for $1\leq i\leq2mk$ and $w\in\emph{S}_{2mk}$.
\end{proposition}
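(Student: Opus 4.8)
The plan is to verify both commutation statements on the spanning family $\{f_S(x)\otimes S : S\in\mathcal{Y}(\sigma),\ f_S\in\mathcal{P}\}$, which suffices by linearity of $\mu$ and of the two $\mathcal{S}_{2mk}$-actions (and $\mu$ is well defined since $\{1\otimes S:S\in\mathcal{Y}(\sigma)\}$ is a basis of $V_\sigma$). Commutation with multiplication by $x_i$ is immediate: $x_i(f_S(x)\otimes S)=(x_if_S(x))\otimes S$, hence $\mu(x_i(f_S(x)\otimes S))=(x_if_S(x))\gamma_S J_{\pi\{S\}}(x)=x_i\,\mu(f_S(x)\otimes S)$, because $\mu$ only multiplies the scalar coordinate by the fixed polynomial $\gamma_S J_{\pi\{S\}}(x)$ and polynomial multiplication is associative.

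For $w\in\mathcal{S}_{2mk}$ I would expand both sides on a single term. On $\mathcal{P}_\sigma$ one has $w(f_S(x)\otimes S)=f_S(xw)\otimes\sigma(w)S=\sum_{S'}A(w)_{S',S}\,(f_S(xw)\otimes S')$, so $\mu(w(f_S(x)\otimes S))=\sum_{S'}A(w)_{S',S}\,f_S(xw)\,\gamma_{S'}J_{\pi\{S'\}}(x)$. For the other order, recall that the action on $\mathcal{P}_\tau$ is $wq(x)=\tau(w)q(xw)$; since $f_S$ is a scalar polynomial this gives $w(f_S(x)\,\gamma_S J_{\pi\{S\}}(x))=f_S(xw)\,(w\,\gamma_S J_{\pi\{S\}})(x)$, and applying the transformation law $w\,\gamma_S J_{\pi\{S\}}=\sum_{S'}A(w)_{S',S}\,\gamma_{S'}J_{\pi\{S'\}}$ recorded just above the definition of $\mu$ yields $w(\mu(f_S(x)\otimes S))=\sum_{S'}A(w)_{S',S}\,f_S(xw)\,\gamma_{S'}J_{\pi\{S'\}}(x)$, the same expression. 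One may run this for all $w$ directly, or only for the generators $s_i$ and extend multiplicatively.

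The genuine content is entirely from the earlier sections: that $\mathrm{span}\{J_{\pi\{S\}}:S\in\mathcal{Y}(\sigma)\}$ is stable under the simple reflections and free of poles at $\kappa=\frac{1}{m+2}$ (the reduction of Section \ref{RedThm} to the $1+2(k-1)$ distinguished tableaux, together with the uniqueness statements of Section \ref{Uniq}), that $\omega_i J_{\pi\{S\}}=c(i,S)J_{\pi\{S\}}$ for all $i$ (Theorem \ref{J2S} via the case analysis of Section \ref{AJM}), and that the scaling $\gamma_S=\left\Vert S\right\Vert/\left\Vert J_{\pi\{S\}}\right\Vert$ makes $\{\gamma_S J_{\pi\{S\}}\}$ transform under $\mathcal{S}_{2mk}$ by the same matrices $A(w)$ as $\{S\}$ (the preceding Proposition). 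Granting these, the statement is a formal verification; the only care needed is bookkeeping, namely keeping the scalar factor $f_S(x)$ and the vector-valued factor $J_{\pi\{S\}}(x)$ separate under the $\mathcal{P}_\tau$-action, which permutes the variables of $f_S$ but not its coefficients. I do not expect a real obstacle at this stage: all the difficulty was front-loaded into the pole-freeness and uniqueness arguments.
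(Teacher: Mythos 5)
Your proposal is correct and follows essentially the same route as the paper: check both commutations on a single term $f(x)\otimes S$, using that multiplication by $x_i$ only touches the scalar factor, and using the transformation rule $w\,\gamma_S J_{\pi\{S\}}=\sum_{S'}A(w)_{S',S}\,\gamma_{S'}J_{\pi\{S'\}}$ together with $wq(x)=\tau(w)q(xw)$ to match both sides for the $\mathcal{S}_{2mk}$-action. The paper's proof is the same computation, with the hard input (pole-freeness, the Jucys--Murphy eigenvalue property, and the normalization $\gamma_S$) likewise deferred to the earlier sections.
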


\begin{proof}
The first part is obvious. For the second part let $g\left(  x\right)
=f\left(  x\right)  \otimes S$ for some $f\in\mathcal{P}$ and $S\in
\mathcal{Y}\left(  \sigma\right)  $. Then
\begin{align*}
wg\left(  x\right)   &  =f\left(  xw\right)  \otimes\sigma\left(  w\right)
S\\
&  =\sum_{S^{\prime}}A\left(  w\right)  _{S^{\prime},S}f\left(  xw\right)
\otimes S^{\prime},\\
\mu\left(  wg\left(  x\right)  \right)   &  =\sum_{S^{\prime}}A\left(
w\right)  _{S^{\prime},S}f\left(  xw\right)  \gamma_{S^{\prime}}J_{\pi\left\{
S^{\prime}\right\}  }\left(  x\right)  .
\end{align*}
Also%
\begin{align*}
w\left(  \mu(g\left(  x\right)  \right)   &  =f\left(  xw\right)  w\gamma
_{S}J_{\pi\left\{  S\right\}  }\left(  x\right)  \\
&  =f\left(  xw\right)  \sum_{S^{\prime}}A\left(  w\right)  _{S^{\prime}%
,S}\gamma_{S^{\prime}}J_{\pi\left\{  S^{\prime}\right\}  ^{\prime}}\\
&  =\mu\left(  wg\left(  x\right)  \right)  .
\end{align*}

\end{proof}

Recall the key fact: $\mathcal{D}_{i}J_{\beta\left\{  S\right\}  ,T\left\{
S\right\}  }=0$ for $1\leq i\leq2mk$ , at $\kappa_{0}=\frac{1}{m+2}$.

\begin{theorem}
The map $\mu$ commutes with $\mathcal{D}_{i}$ for $1\leq i\leq2mk$.
\end{theorem}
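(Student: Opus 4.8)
The plan is to verify $\mu\left(\mathcal{D}_{i}g\right)=\mathcal{D}_{i}\mu\left(g\right)$ by induction on the degree of $g$, reducing everything to the single nontrivial input $\mathcal{D}_{i}J_{\pi\left\{ S\right\} }=0$ at $\kappa=\frac{1}{m+2}$ together with the two compatibilities of $\mu$ already established. By linearity it suffices to treat $g=x^{\alpha}\otimes S$ with $\alpha\in\mathbb{N}_{0}^{2mk}$ and $S\in\mathcal{Y}\left(\sigma\right)$, and I would induct on $\left\vert \alpha\right\vert $.

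For the base case $\left\vert \alpha\right\vert =0$, i.e. $g=1\otimes S$: the definition of $\mathcal{D}_{i}$ gives $\mathcal{D}_{i}\left(1\otimes S\right)=0$, since the partial derivative and every difference quotient of a constant vanish, so $\mu\left(\mathcal{D}_{i}g\right)=0$; and $\mathcal{D}_{i}\mu\left(g\right)=\gamma_{S}\,\mathcal{D}_{i}J_{\pi\left\{ S\right\} }=0$ by the singularity of $J_{\pi\left\{ S\right\} }$ recalled just before the statement. This lone step is where the entire machinery of Sections \ref{Brick}--\ref{Uniq} enters.

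For the inductive step with $\left\vert \alpha\right\vert =d\geq1$, choose an index $j$ with $\alpha_{j}\geq1$ and write $g=x_{j}h$, where $h\in\mathcal{P}_{\sigma}$ has degree $d-1$. The excerpt gives $\mathcal{D}_{i}x_{i}=x_{i}\mathcal{D}_{i}+1+\kappa\sum_{l\neq i}\left(i,l\right)$, and the same computation yields $\mathcal{D}_{i}x_{j}=x_{j}\mathcal{D}_{i}-\kappa\left(i,j\right)$ for $j\neq i$; uniformly, $\mathcal{D}_{i}x_{j}=x_{j}\mathcal{D}_{i}+c_{ij}$ with $c_{ij}=\delta_{ij}\bigl(1+\kappa\sum_{l\neq i}\left(i,l\right)\bigr)-\left(1-\delta_{ij}\right)\kappa\left(i,j\right)\in\mathbb{R}\left(\kappa\right)\mathcal{S}_{2mk}$. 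Crucially this operator identity holds verbatim on both $\mathcal{P}_{\sigma}$ and $\mathcal{P}_{\tau}$ — the two Dunkl operators differ only in whether transpositions act through $\sigma$ or through $\tau$ — and $\mu$ commutes with multiplication by $x_{j}$ and with the $\mathcal{S}_{2mk}$-action, hence with $c_{ij}$. Therefore, using the inductive hypothesis applied to $h$,
\begin{align*}
\mathcal{D}_{i}\mu\left(g\right) &= \mathcal{D}_{i}\bigl(x_{j}\,\mu\left(h\right)\bigr)=x_{j}\,\mathcal{D}_{i}\mu\left(h\right)+c_{ij}\,\mu\left(h\right)=x_{j}\,\mu\left(\mathcal{D}_{i}h\right)+c_{ij}\,\mu\left(h\right)\\
&= \mu\bigl(x_{j}\,\mathcal{D}_{i}h+c_{ij}\,h\bigr)=\mu\bigl(\mathcal{D}_{i}\left(x_{j}h\right)\bigr)=\mu\left(\mathcal{D}_{i}g\right),
\end{align*}
which closes the induction.

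I do not expect a real obstacle here: the substance has already been extracted, namely the singularity $\mathcal{D}_{i}J_{\pi\left\{ S\right\} }=0$ and the earlier propositions on $\mu$. The one point that must be handled with care is that this entire argument lives at the specialized value $\kappa=\frac{1}{m+2}$; this is legitimate precisely because the uniqueness results of Section \ref{Uniq} guarantee each $J_{\pi\left\{ S\right\} }$ is pole-free there, so $\mu\left(g\right)$ is a genuine element of $\mathcal{P}_{\tau}$ and $\mathcal{D}_{i}$ may be applied to it unambiguously. Combined with the preceding proposition, this exhibits $\mu$ as a homomorphism of modules over the rational Cherednik algebra.
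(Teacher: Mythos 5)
Your proof is correct, but it takes a different route from the paper's. The paper proves the theorem by a direct computation: writing $g=f(x)\otimes S$, it expands $\mathcal{D}_{i}(\mu g)$ using the Leibniz-type splitting of the difference quotient $\frac{f(x)J_{\pi\{S\}}(x)-f(x(i,j))J_{\pi\{S\}}(x(i,j))}{x_{i}-x_{j}}$, invokes the equivariance $\tau((i,j))\gamma_{S}J_{\pi\{S\}}(x(i,j))=\sum_{S'}A((i,j))_{S',S}\gamma_{S'}J_{\pi\{S'\}}(x)$, and observes that the leftover term $f(x)\,\mathcal{D}_{i}J_{\pi\{S\}}(x)$ vanishes by singularity. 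You instead induct on degree, pushing all the analytic content into the base case $\mathcal{D}_{i}J_{\pi\{S\}}=0$ and handling the inductive step purely formally via the commutator identity $\mathcal{D}_{i}x_{j}=x_{j}\mathcal{D}_{i}+c_{ij}$ with $c_{ij}\in\mathbb{R}\mathcal{S}_{2mk}$ (which you correctly derive for $i\neq j$, the paper only displaying the case $i=j$), together with the preceding proposition that $\mu$ intertwines multiplication by coordinates and the $\mathcal{S}_{2mk}$-action. The two arguments use the same three inputs, but yours makes transparent that commutation with $\mathcal{D}_{i}$ is forced by the algebra relations once $\mu$ commutes with the $x_{j}$ and with $\mathcal{S}_{2mk}$ and kills no constants wrongly (i.e.\ the degree-zero case holds), whereas the paper's computation is self-contained and exhibits explicitly which term dies; your version is arguably cleaner and emphasizes the rational Cherednik algebra structure, at the cost of needing the (easy but unstated) commutator $\mathcal{D}_{i}x_{j}=x_{j}\mathcal{D}_{i}-\kappa(i,j)$ on both $\mathcal{P}_{\sigma}$ and $\mathcal{P}_{\tau}$. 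Your closing remark about legitimacy of the specialization $\kappa=\frac{1}{m+2}$ is also consistent with the paper's logic.
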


\begin{proof}
Let $g\left(  x\right)  =f\left(  x\right)  \otimes S$ for some $f\in
\mathcal{P}$ and $S\in\mathcal{Y}\left(  \sigma\right)  $ then%
\begin{align*}
\mathcal{D}_{i}g\left(  x\right)   &  =\frac{\partial}{\partial x_{i}}f\left(
x\right)  \otimes S+\kappa_{0}\sum_{j=1,j\neq i}^{2mk}\frac{f\left(  x\right)
-f\left(  x\left(  i,j\right)  \right)  }{x_{i}-x_{j}}\otimes\sigma\left(
\left(  i,j\right)  \right)  S\\
&  =\frac{\partial}{\partial x_{i}}f\left(  x\right)  \otimes S+\kappa_{0}%
\sum_{j=1,j\neq i}^{2mk}\frac{f\left(  x\right)  -f\left(  x\left(
i,j\right)  \right)  }{x_{i}-x_{j}}\otimes\sum_{S^{\prime}}A\left(  \left(
i,j\right)  \right)  _{S^{\prime},S}S^{\prime},\\
\mu\left(  \mathcal{D}_{i}g\left(  x\right)  \right)   &  =\left(
\frac{\partial}{\partial x_{i}}f\left(  x\right)  \right)  \gamma_{S}%
J_{\pi\left\{  S\right\}  }\left(  x\right) \\
&  +\kappa_{0}\sum_{j=1,j\neq i}^{2mk}\frac{f\left(  x\right)  -f\left(
x\left(  i,j\right)  \right)  }{x_{i}-x_{j}}\sum_{S^{\prime}}A\left(  \left(
i,j\right)  \right)  _{S^{\prime},S}\gamma_{S^{\prime}}J_{\pi\left\{
S^{\prime}\right\}  }\left(  x\right)  .
\end{align*}
On the other hand%
\begin{align*}
\mathcal{D}_{i}\left(  \mu g\left(  x\right)  \right)   &  =\left(
\frac{\partial}{\partial x_{i}}f\left(  x\right)  \gamma_{S}J_{\pi\left\{
S\right\}  }\left(  x\right)  \right) \\
&  +\kappa_{0}\sum_{j=1,j\neq i}^{2mk}\tau\left(  \left(  i,j\right)  \right)
\gamma_{S}\frac{f\left(  x\right)  J_{\pi\left\{  S\right\}  }\left(
x\right)  -f\left(  x\left(  i,j\right)  \right)  J_{\pi\left\{  S\right\}
}\left(  x\left(  i,j\right)  \right)  }{x_{i}-x_{j}}\\
&  =\left(  \frac{\partial}{\partial x_{i}}f\left(  x\right)  \right)
\gamma_{S}J_{\pi\left\{  S\right\}  }\left(  x\right)  +f\left(  x\right)
\left(  \frac{\partial}{\partial x_{i}}\gamma_{S}J_{\pi\left\{  S\right\}
}\left(  x\right)  \right) \\
&  +\kappa_{0}f\left(  x\right)  \gamma_{S}\sum_{j=1,j\neq i}^{2mk}\tau\left(
\left(  i,j\right)  \right)  \gamma_{S}\frac{J_{\pi\left\{  S\right\}
}\left(  x\right)  -J_{\pi\left\{  S\right\}  }\left(  x\left(  i,j\right)
\right)  }{x_{i}-x_{j}}\\
&  +\kappa_{0}\sum_{j=1,j\neq i}^{2mk}\tau\left(  \left(  i,j\right)  \right)
\frac{f\left(  x\right)  -f\left(  x\left(  i,j\right)  \right)  }{x_{i}%
-x_{j}}J_{\pi\left\{  S\right\}  }\left(  x\left(  i,j\right)  \right)  .
\end{align*}
But
\begin{align*}
\tau\left(  \left(  i,j\right)  \right)  \gamma_{S}J_{\pi\left\{  S\right\}
}\left(  x\left(  i,j\right)  \right)   &  =\left(  i,j\right)  \gamma
_{S}J_{\pi\left\{  S\right\}  }\left(  x\right) \\
&  =\sum_{S^{\prime}}A\left(  \left(  i,j\right)  \right)  _{S^{\prime}%
,S}\gamma_{S^{\prime}}J_{\pi\left\{  S^{\prime}\right\}  }\left(  x\right)
\end{align*}
and thus $\mathcal{D}_{i}\left(  \mu g\left(  x\right)  \right)  =\mu\left(
\mathcal{D}_{i}g\left(  x\right)  \right)  $. Notice the part $f\left(
x\right)  \mathcal{D}_{i}J_{\pi\left\{  S\right\}  }\left(  x\right)  \ $of
the calculation vanishes.
\end{proof}

The Proposition and the Theorem together show that $\mu$ is a map of modules
of the rational Cherednik algebra (with parameter $\kappa=\frac{1}{m+2}$). In
fact the map can be reversed: define scalar polynomials $p_{S,T}\left(
x\right)  $ by $\gamma_{S}J_{\pi\left\{  S\right\}  }\left(  x\right)
=\sum_{T\in\mathcal{Y}\left(  \tau\right)  }p_{S,T}\left(  x\right)  \otimes
T$ then it can be shown (fairly straightforwardly) that%
\[
q_{T}\left(  x\right)  =\sum_{S}p_{S,T}\left(  x\right)  \frac{\left\Vert
T\right\Vert ^{2}}{\left\Vert S\right\Vert ^{2}}\otimes S
\]
is a singular polynomial in $\mathcal{P}_{\sigma}$ for $\kappa=-\frac{1}{m+2}$
and is of isotype $\tau$. So one can define a map analogous to $\mu$ from
$\mathcal{P}_{\tau}\rightarrow\mathcal{P}_{\sigma}$. There are general results
about duality and maps of modules of the rational Cherednik algebra in
\cite[Sect. 4]{GGOR2003}. In \cite{GGJL2017} there are theorems about the
existence of maps between standard modules in the context of complex
reflection groups.

\section{Further developments and concluding remarks\label{Conclude}}

The construction of singular polynomials in $\mathcal{P}_{\tau}$ which are of
isotype $\sigma$ is easily extendable to $\kappa=\frac{n}{m+2}$ with $n\geq1$
and $\gcd\left(  n,m+2\right)  =1$. Define $\lambda^{\prime}=n\lambda$ then
$J_{\lambda^{\prime},T_{0}}$ is singular for $\kappa=\frac{n}{m+2}$. This is
valid because the uniqueness theorems can be derived from the $n=1$ case:
suppose $\left(  \beta,T\right)  \in\mathbb{N}_{0}^{2mk}\times\mathcal{Y}%
\left(  \tau\right)  $ such that $\beta\trianglelefteq n\lambda$ and
$\frac{m+2}{n}\beta_{i}+c\left(  r_{\beta}\left(  i\right)  ,T\right)
=c\left(  i,S_{0}\right)  $, then $\frac{m+2}{n}\beta_{i}\in\mathbb{N}_{0}$
which implies $\beta_{i}=n\beta_{i}^{\prime}$ , for each $i$. Further
$\beta^{\prime}\trianglelefteq\lambda$ and the uniqueness of $\beta^{\prime}$
shows the same for $\beta$.

It may be possible to extend our analysis to the situation where the top brick
is truncated, that is, $\sigma=\left(  mk+\ell,mk+\ell\right)  $ and
$\tau=\left(  m^{2k},\ell,\ell\right)  $ with $1\leq\ell<m$, but we leave this
for another time.

In this paper we constructed singular nonsymmetric Jack polynomials in
$\mathcal{P}_{\tau}$ which are of isotype $\sigma$. In general, suppose $\tau$
and $\sigma$ are partitions of $N$ and there are singular polynomials in
$\mathcal{P}_{\tau}$ for $\kappa=\kappa_{0}$ which are of isotype $\sigma$,
then it can be shown that there are singular polynomials in $\mathcal{P}%
_{\sigma}$ for $\kappa=-\kappa_{0}$ which are of isotype $\tau$. This idea was
sketched in Section \ref{Maps}. It turns out that interesting new problems may
arise. In the present work we used uniqueness theorems about spectral vectors
to show the validity of specializing NSJP's to $\kappa=\kappa_{0}$ (some fixed
rational) to obtain singular polynomials. However it is possible that some
singular polynomial is a simultaneous eigenfunction of $\left\{
\mathcal{U}_{i}\right\}  $ but is not the specialization of an NSJP.

Our example is for $N=5,\tau=\left(  3,1,1\right)  ,\sigma=\left(  5\right)  $
and $\kappa=\frac{1}{2}$. The singular polynomials for $\mathcal{P}_{\sigma}$
(scalar polynomials) are well-known (\cite{D2004}). In particular $J_{\left(
3,2,0,0,0\right)  }$ has no pole at $\kappa=-\frac{1}{2}$ and is singular
there, furthermore it is of isotype $\tau=\left(  3,1,1\right)  $. From the
general result there are singular polynomials in $\mathcal{P}_{\tau}$ of
isotype $\sigma$ (that is, invariant) for $\kappa=\frac{1}{2}$. The uniqueness
approach fails here. Let%
\begin{align*}
T  &  =%
\begin{bmatrix}
5 & 4 & 3\\
2 &  & \\
1 &  &
\end{bmatrix}
,T^{\prime}=%
\begin{bmatrix}
5 & 3 & 2\\
4 &  & \\
1 &  &
\end{bmatrix}
,\\
\left[  c\left(  \cdot,T\right)  \right]   &  =\left[  -2,-1,2,1,0\right]
,\left[  c\left(  \cdot,T^{\prime}\right)  \right]  =\left[
-2,2,1,-1,0\right]  ,\\
\alpha &  =\left(  3,2,0,0,0\right)  ,\beta=\left(  1,1,2,1,0\right)  .
\end{align*}
The spectral vectors $\left[  2\gamma_{i}+c\left(  r_{\gamma}\left(  i\right)
,T\right)  \right]  _{i=1}^{5}$ are (note $r_{\beta}=\left(  2,3,1,4,5\right)
$)%
\begin{align*}
\zeta_{\alpha,T}^{\prime}  &  =\left(  6-2,4-1,2,1,0\right)  =\left(
4,3,2,1,0\right)  ,\\
\zeta_{\beta,T^{\prime}}^{\prime}  &  =\left(  2+2,2+1,4-2,2-1,0\right)
=\left(  4,3,2,1,0\right)  .
\end{align*}
By direct (symbolic computation assisted) calculation we find that both
$J_{\alpha,T}$ and $J_{\beta,T^{\prime}}$ are defined (no pole) at
$\kappa=\frac{1}{2}$, neither is singular or of isotype $\left(  5\right)  $
(invariant under each $s_{i}$) but%
\[
\mathcal{D}_{i}\left(  J_{\alpha,T}+2J_{\beta,T^{\prime}}\right)  =0,~1\leq
i\leq5,~\kappa=\frac{1}{2}.
\]
Also $J_{\alpha,T}+2J_{\beta,T^{\prime}}$ is invariant and $\mathcal{U}%
_{i}^{\prime}\left(  J_{\alpha,T}+2J_{\beta,T^{\prime}}\right)  =\left(
5-i\right)  \left(  J_{\alpha,T}+2J_{\beta,T^{\prime}}\right)  $ for $1\leq
i\leq5$. The polynomial $J_{\alpha,T}$ is a sum of 100 monomials in $x$, with
coefficients in $V_{\tau}$.

We suspect that our results benefitted from the fact that $\tau$ and $\sigma$
are rectangular partitions, and that the analysis of singular polynomials for
other partitions (hook tableaux for example) becomes significantly more difficult.

\end{document}